\numberwithin{equation}{section}
\newtheorem{OldTheorem}{Theorem}
\newtheorem{theorem}{Theorem}
\newtheorem{lemma}{Lemma}
\newtheorem{definition}{Definition}
\newtheorem{corollary}{Corollary}
\newtheorem{prop}{Proposition}
\newtheorem{remark}{Remark}
\def\ll{\left }
\def\rr{\right }
\def\dist{{\rm dist\,}}
\def\spec{{\rm spec\,}}
\def\ZZ{\ensuremath{\mathbb Z}}
\def\zI{\ensuremath{\mathcal I}}
\def\ZN{\ensuremath{\mathbb N}}
\def\ZT{\ensuremath{\mathbb T}}
\def\ZI{\ensuremath{\textbf I}}
\def\md#1#2\emd{\ifx0#1
\begin{equation*} #2 \end{equation*}\fi  %  single line display, no number
\ifx1#1\begin{equation}#2\end{equation}\fi   % single line display, number
\ifx2#1\begin{align*}#2\end{align*}\fi   % aligned display, no number
\ifx3#1\begin{align}#2\end{align}\fi    % aligned display, number
\ifx4#1\begin{gather*}#2\end{gather*}\fi  % multiline, not align, no number
\ifx5#1\begin{gather}#2\end{gather}\fi   % multinline, not align
\ifx6#1\begin{multline*}#2\end{multline*}\fi  %  display too long for one line
\ifx7#1\begin{multline}#2\end{multline}\fi  % as above, with numbers
}
\newcommand {\e }[1]{\eqref{#1}}
\newcommand {\lem }[1]{Lemma \ref{#1}}
\newcommand {\trm }[1]{Theorem \ref{#1}}
\newcommand {\cor }[1]{Corollary \ref{#1}}
\newcommand {\pro }[1]{Proposition \ref{#1}}
\begin{document}
	\title[Menshov type correction theorems]{Menshov type correction theorems for sequences of compact operators}

	\author{Grigori A. Karagulyan}
	\address{Faculty of Mathematics and Mechanics, Yerevan State
		University, Alex Manoogian, 1, 0025, Yerevan, Armenia} 
	\email{g.karagulyan@ysu.am}
	
\subjclass[2010]{42A20, 42B16, 47A58, 47B07}
	\keywords{Menshov correction theorems, compact operators, almost everywhere convergence, Fourier series, Walsh system}
	\date{}
	\maketitle
	\begin{abstract}
		We prove Menshov type "correction" theorems for sequences of compact operators, recovering several results of  Fourier series in trigonometric and Walsh systems. The paper clarifies the main ingredient, which is important in the study of such "correction" theorems.  That is the weak-$L^1$ estimate for the  maximal Fourier sums of  indicator functions of some specific sets.
	\end{abstract}
\section{Introduction}
The modifications of functions in order to improve convergence properties of their  Fourier series is an old issue in Fourier Analysis. A well known modification method is the change of function values on a set of small measure.  Menshov's two classical theorems (\cite{Men1}, \cite{Men2}, see also \cite{Bari}) were crucial in this study. 
\begin{OldTheorem}[Menshov, \cite{Men1}]\label{Menshov-1}
	For any continuous function $f\in C(\ZT)$ and $\varepsilon>0$ there is a function $g\in C(\ZT)$, whose trigonometric Fourier series is uniformly convergent and $|\{f(x)\neq g(x)\}|<\varepsilon$.
\end{OldTheorem}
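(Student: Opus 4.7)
The plan is an iterative correction of $f$ on shrinking exceptional sets, building the Fourier series of $g$ out of trigonometric polynomials with pairwise disjoint spectra.

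\textbf{Reduction.} By the Weierstrass theorem I first approximate $f$ uniformly by a trigonometric polynomial $T_0$ with $\|f-T_0\|_\infty<\eta_0$ for a small parameter $\eta_0$ to be chosen. It therefore suffices to correct the small continuous residual $h_0=f-T_0$ on a set of measure less than $\varepsilon$ while producing a function with uniformly convergent Fourier series.

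\textbf{Key polynomial lemma.} The central technical step is a Menshov-type lemma: for every closed set $E\subset\ZT$ with $|\ZT\setminus E|<\delta$ and every $\eta>0$ there exist an integer $N$ and a trigonometric polynomial $P$ of degree at most $N$ with $P\equiv 1$ on $E$, $\|P\|_\infty\le 1+\eta$, and $\sup_n\|S_nP\|_\infty\le C$ for an absolute constant $C$. One constructs $P$ by convolving $\chi_E$ with a suitably high-order de la Vall\'ee Poussin kernel. The bound on $\sup_n \|S_nP\|_\infty$ is the delicate point and reduces, via Abel summation and duality, to a weak-$L^1$ estimate for the maximal partial-sum operator applied to $\chi_F$ for an auxiliary family of sets $F$; this is precisely the main ingredient singled out in the abstract.

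\textbf{Iteration.} I would partition the range of $h_0$ into thin level intervals, apply the lemma to a closed subset of each preimage, and sum the results with appropriate scalars to produce a polynomial $Q_1$ with $Q_1\approx h_0$ on a set $E^{(1)}\subset\ZT$ of large measure, spectrum bounded by $N_1$, and $\|S^*Q_1\|_\infty\lesssim\|h_0\|_\infty$. Iterating on the residual restricted to $E^{(1)}$ yields $Q_2$ with spectrum in $[N_1+1,N_2]$, and so on, chosen so that $\|(h_0-Q_1-\cdots-Q_k)\chi_{E^{(k)}}\|_\infty\le 2^{-k}\|h_0\|_\infty$ and $\sum_k|\ZT\setminus E^{(k)}|<\varepsilon$. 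Setting $g=T_0+\sum_k Q_k$, the disjoint-spectrum structure together with the maximal bound $\|S^*Q_k\|_\infty\lesssim 2^{-k}\|h_0\|_\infty$ ensures that $S_Ng\to g$ uniformly, so $g$ is continuous and its trigonometric Fourier series converges uniformly; moreover $g=f$ outside $\bigcup_k(\ZT\setminus E^{(k)})$, a set of measure $<\varepsilon$.

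\textbf{Main obstacle.} The hard part is the uniform maximal bound $\|S^*P\|_\infty\lesssim 1$ in the polynomial lemma, with a constant independent of the set $E$. This is exactly where the weak-$L^1$ estimate for the maximal Fourier operator on indicator functions enters; without it, one cannot prevent the partial sums of the corrected series from blowing up on the tiny exceptional set. Once that estimate is in hand, the remaining work is a careful bookkeeping of the parameters $\delta_k,\eta_k,N_k$ to balance approximation quality against the total measure of the exceptional set, which is straightforward.
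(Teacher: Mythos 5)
First, a point of orientation: the paper never proves Theorem~\ref{Menshov-1}; it is quoted as classical background (with a reference to Olevskii's proof), and the machinery developed here (compactness plus the weak-$L^1$ estimates of Propositions~\ref{P1} and~\ref{P2}) is designed to recover Theorem~\ref{Menshov-2} and the $L^1$/a.e.\ correction results, not the uniform-convergence statement. So your proposal has to stand against the classical Menshov--Olevskii argument, and as written it does not. Your ``key polynomial lemma'' is false: if $P$ is a trigonometric polynomial with $P\equiv 1$ on a closed set $E$ of positive measure, then $P-1$ is a trigonometric polynomial with infinitely many zeros, hence $P\equiv 1$ identically, so no nontrivial such $P$ exists. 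Even after the necessary weakening to $|P-1|<\eta$ on $E$, your summation over level sets does not produce $Q_1\approx h_0$ on a large set: on $E_j$ you get $a_j$ plus $\sum_{i\neq j}a_iP_i$, and the only information you have about $P_i$ off $E_i$ is $\|P_i\|_\infty\le 1+\eta$, so the cross terms are of the order of $\|h_0\|_\infty$ times the number of levels. You would need the $P_i$ to behave like an approximate partition of unity adapted to the level sets, which is not what the lemma supplies (the exact equality $g=f$ off a small set is a separate, repairable issue, handled classically by redefining the residual at each step).

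The deeper gap is the source of the uniform bound $\sup_n\|S_nP\|_\infty\le C$. Convolution of $\ZI_E$ with a de la Vall\'ee Poussin kernel $V_N$ controls $\|P\|_\infty$, but $S_n(P)=\ZI_E*S_n(V_N)=(S_n\ZI_E)*V_N$, and neither factorization gives a uniform bound: $\|S_n(V_N)\|_1\sim\log n$ for $n\le N$, and $\|S_n\ZI_E\|_\infty$ is not uniformly bounded for the sets you need. More importantly, the proposed rescue --- reducing ``via Abel summation and duality'' to a weak-$L^1$ estimate for $S^*$ on indicator functions --- cannot work in principle: a weak-$(1,1)$ bound only controls the measure of the level sets of $S^*(\ZI_F)$ and can never yield an $L^\infty$ bound; indeed $S^*(\ZI_F)$ genuinely blows up on small sets, which is harmless for almost everywhere convergence but fatal for uniform convergence. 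The weak-type estimates for $S^*(\ZI_{G})$ on uniformly distributed sets are exactly the ingredient this paper isolates (Propositions~\ref{P1}, \ref{P2}), and they feed Theorems~\ref{T1} and~\ref{T2}, i.e.\ Menshov's Theorem~\ref{Menshov-2}; Theorem~\ref{Menshov-1} instead requires Menshov's hard lemma, producing correcting polynomials $Q$ with prescribed high spectrum, $|Q-\gamma|$ small off a set of measure $<\varepsilon$, and $\max_n\|S_nQ\|_\infty\le C|\gamma|$ with an absolute constant --- an intricate explicit construction (see Olevskii's survey), not a smoothing of $\ZI_E$. Without that lemma or a substitute for it, the final step of your iteration, the summable bounds $\|S^*Q_k\|_\infty\lesssim 2^{-k}\|h_0\|_\infty$, has no foundation.
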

Observe that in the statement of this theorem the initial function $f$ can be equivalently taken to be arbitrary finite-valued measurable function, and this follows from the well known theorem of Luzin on continuous modification of measurable functions. Besides, one can see that in this theorem the modification set $\{f\neq g\}$ depend on the initial function. In the next theorem of Menshov the initial function is modified on a given everywhere dense open set, but the resulting function gets almost everywhere convergence Fourier series instead of uniformly.
\begin{OldTheorem}[Menshov, \cite{Men2}]\label{Menshov-2}
	Let $f\in L^1(\ZT)$ and $G\subset \ZT$ be an everywhere dense open set. Then there is a function $g\in L^1(\ZT)$ with an a.e. convergent Fourier series so that $\{g(x)\neq f(x)\}\subset G$.
\end{OldTheorem}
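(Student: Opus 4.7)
The plan is to build $g$ as an $L^1$-limit of a sequence $g_0=f$, $g_1$, $g_2$, $\dots$ constructed by iterative correction, where each $g_k-g_{k-1}$ is supported inside $G$, so that the constraint $\{g\neq f\}\subset G$ is preserved automatically at every stage. A preliminary truncation $f=f\chi_{\{|f|\le N\}}+f\chi_{\{|f|>N\}}$ reduces the construction to a bounded (and, by Luzin, simple) initial datum, the $L^1$-small tail being absorbed into a summable perturbation budget.

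The engine of the proof is a constrained one-step correction lemma: given a bounded $h$, a range of indices $[M',M'']$, and $\varepsilon>0$, produce $h'$ with $\{h\neq h'\}\subset G$, $\|h'-h\|_{L^1}<\varepsilon$, $\|h'\|_\infty$ under control, and an exceptional set $E$ of measure $<\varepsilon$ off which $|S_n(h')-h'|<\varepsilon$ for every $n\in[M',M'']$. This is \trm{Menshov-1} subject to the extra geometric constraint that the modification set lie inside $G$. The density of $G$ is what makes this feasible: one decomposes $G$ into its open-interval components and places the standard Menshov correction polynomial inside a union of components of arbitrarily small total measure but arbitrarily fine spatial spread, so that its presence is felt everywhere. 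Iterating along a rapidly growing sequence of ranges $[M_k',M_k'']$ with geometrically decaying $\varepsilon_k$ yields $g\in L^1(\ZT)$ with $\{g\neq f\}\subset G$, together with stage-wise control of $S_n(g_k)-g_k$ on each range $[M_k',M_k'']$.

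The main obstacle, and the point at which the paper's central ingredient enters, is upgrading this stage-wise control into genuine a.e. convergence of $S_n(g)$. Since $g-g_k=\sum_{j>k}(g_j-g_{j-1})$ is a tail of future corrections, each supported on a specific small subset $A_j\subset G$, one must bound the maximal Fourier partial sum operator $S^*\chi_{A_j}$ in weak $L^1$: an estimate of the form $|\{S^*\chi_A>\lambda\}|\le C|A|/\lambda$ for indicator functions of the correction sets $A$ produced by the construction. This is precisely the weak-$L^1$ estimate on indicator functions of \emph{specific sets} highlighted in the abstract. With it in hand, the cumulative contribution of future corrections to the maximal Fourier sums of $g$ is controlled in measure by a summable quantity, and Borel--Cantelli combined with the stage-wise control of $S_n(g_k)$ yields $S_n(g)(x)\to g(x)$ for almost every $x\in\ZT$.
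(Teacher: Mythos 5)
Your overall architecture --- corrections supported in $G$ with a geometrically decaying budget, a weak-$L^1$ bound for $S^*$ of indicator functions of the specific correction sets, and a Borel--Cantelli passage to a.e.\ convergence --- is in spirit the paper's route to \trm{Menshov-2} (\trm{T2} via \lem{L2} and \pro{P1}). But two of your steps have genuine gaps. First, the reduction: truncation and Luzin replace $f$ on a set that has nothing to do with $G$, so if $g_0$ is the simplified datum and all later corrections are supported in $G$, the limit $g$ cannot agree with $f$ off $G$; and if instead you keep $g_0=f$ and ``absorb the $L^1$-small tail into a summable perturbation budget'', nothing controls that tail's partial sums: smallness in $L^1$ gives no information whatsoever about a.e.\ behaviour of $S_n$ (Kolmogorov's divergence example has arbitrarily small norm), and your weak-type control is available only for the specially constructed corrections, not for an arbitrary summand. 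The irregular part of $f$ must itself be processed: one writes $f$ as an $L^1$-convergent series of step functions with norms $\lesssim 4^{-k}$ and corrects \emph{every} term inside $G$, as in the proofs of \trm{T1} and \trm{T2}, so that each summand of $g$ carries a weak-type maximal bound.

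Second, the ``constrained one-step correction lemma'' is exactly the heart of Menshov's theorem and cannot be obtained as \trm{Menshov-1} ``subject to a geometric constraint'': the classical constructions choose the modification set freely, whereas here it is prescribed; moreover, as stated the lemma is false for an arbitrary given range $[M',M'']$ (for a low range, $S_n(h')$ is a trigonometric polynomial of small fixed degree forced to approximate $h$ off $G\cup E$, which is impossible in general), and its hypothesis that $h$ be bounded is violated in the only way it could be applied if $g_0=f$. Proving the achievable version requires precisely the machinery you reserve for the tail: the correction must redistribute the local mean of a step approximant onto uniformly spread subsets $G\cap\Delta_k$ of the prescribed set, so that its effect on low-index partial sums is negligible (in the paper this is the weak convergence \e{a12} combined with compactness, condition \e{l}), thresholds $N_j$ are interleaved with the corrections to handle the high indices (\e{ll}, \e{lll}), and the intermediate indices are controlled exactly by the weak-$L^1$ estimate for $S^*$ of indicators of the spread sets (\pro{P1}, entering through \e{a46} and the term $A_4$ in \lem{L1}, \lem{L2}). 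So the weak-type estimate is not merely a device for summing future corrections; it is what makes each single correction admissible, and ``placing the standard Menshov correction polynomial inside finely spread components of $G$'' does not substitute for that argument. Once these two points are repaired, your scheme essentially reproduces the paper's proof.
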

An elegant prove of \trm {Menshov-1} was given by Olevskii in \cite{Ole}, where one can also find a nice review of some other related results. Extensions of \trm {Menshov-1} for Walsh and other multiplicative systems were proved in the papers \cite {GoWa}, \cite{BaRu}, \cite{Khr}. The papers \cite{Kash} and \cite{KaKo} consider the analogous of \trm {Menshov-1} for trigonometric and general orthonormal matrices. 

It is well known that one can not claim $L^1$-norm convergence in \trm {Menshov-2} instead of a.e.. Nevertheless, Grigoryan \cite{Grig2} proved existence of an open set $G$ of small measure, serving as a correction set for $L^1$-convergence of Fourier series. Namely,
\begin{OldTheorem}[Grigoryan, \cite{Grig2}]\label{Grig2}
	For any $\varepsilon>0$ there exists an open set $G_\varepsilon\subset (\ZT)$ with $|G_\varepsilon|<\varepsilon$ such that for any $f\in L^1(\ZT)$ one can find a function $g\in L^1(\ZT)$, whose Fourier series converges in $L^1$-norm and $\{g(x)\neq f(x)\}\subset G$.
\end{OldTheorem}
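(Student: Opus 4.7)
The plan is to construct a single open set $G_\varepsilon$ of measure less than $\varepsilon$ that serves as a universal correction set for every $f \in L^1(\ZT)$, and then, for each such $f$, to obtain the corrected function $g$ by smoothing a dyadic step-function approximation of $f$ precisely inside $G_\varepsilon$.

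\textbf{Construction of $G_\varepsilon$.} Fix a rapidly decreasing sequence $\delta_k > 0$ with $\sum_k \delta_k < \varepsilon$, and let
\[
G_\varepsilon \;=\; \bigcup_{k\ge 1}\bigcup_{j=0}^{2^k-1} \Big(\tfrac{j}{2^k} - \delta_k 2^{-k},\; \tfrac{j}{2^k} + \delta_k 2^{-k}\Big).
\]
Then $|G_\varepsilon| < \varepsilon$, and every dyadic rational $j/2^k$ has a neighborhood in $G_\varepsilon$ on every scale, which is what lets the same set absorb the discontinuities produced by every possible dyadic step-function approximation.

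\textbf{Correction procedure.} Approximate $f \in L^1(\ZT)$ in norm by dyadic step functions $\phi_k$ with $\|\phi_k - \phi_{k-1}\|_1 < 2^{-k}$. For each $\phi_k$, define $g_k$ by smoothly interpolating the jumps of $\phi_k$ inside the $G_\varepsilon$-intervals clustered around the corresponding dyadic endpoints (for instance with a $C^\infty$ cosine ramp). Then $g_k$ is smooth and has uniformly convergent Fourier series; moreover $g_k = \phi_k$ off $G_\varepsilon$. Setting $g = \lim_k g_k$ in $L^1$ (the limit exists by absolute summability of the increments) produces a function that agrees with $f$ off $G_\varepsilon$.

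\textbf{$L^1$-convergence of $S_n g$.} To deduce $\|S_n g - g\|_1 \to 0$, control the telescoping tails $g - g_k = \sum_{\ell > k}(g_\ell - g_{\ell-1})$ by a uniform weak-$L^1$ bound for the maximal Fourier partial sums of each smoothed increment $g_\ell - g_{\ell-1}$. This is the step the abstract of the paper explicitly identifies as the main ingredient: a weak-$L^1$ estimate for $\sup_n |S_n h|$ when $h$ is of indicator type modeled on the natural building blocks of $G_\varepsilon$, with constant independent of the dyadic scale. Combined with the $L^1$-convergence of each individual $S_n g_k$, this estimate forces $S_n g \to g$ in $L^1$.

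\textbf{Main obstacle.} The crux of the argument is precisely the scale-invariant weak-$L^1$ estimate for the maximal Fourier sums of the corrected building blocks. Without such uniformity, the telescoping sum cannot be controlled in $L^1$-norm, and a single $G_\varepsilon$ cannot be made to work for all $f \in L^1$ simultaneously. Everything else (the construction of $G_\varepsilon$, the smoothing procedure, the telescoping, the reduction to dyadic step functions) is standard; the full weight of the proof rests on that maximal inequality, which is the technical focus of the paper.
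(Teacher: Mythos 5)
Your plan breaks down at the decisive step, and the estimate you lean on is of the wrong type for this theorem. A weak-$L^1$ bound for $\sup_n|S_nh|$ cannot control $\|S_n(g-g_k)\|_1$: weak-type inequalities feed density arguments for \emph{almost everywhere} convergence, but they give no bound on $L^1$ norms (a function in weak-$L^1$ need not even be integrable), so the telescoping argument "weak maximal bound for the increments $+$ $L^1$-convergence of each $S_ng_k$ $\Rightarrow$ $\|S_ng-g\|_1\to0$" does not go through. In the paper the weak-$L^1$ maximal estimates (conditions (C), (D), Propositions 1--3) are used exclusively for the a.e.\ statements (the Menshov-type Theorem 2 and the a.e.\ part of Theorem 1); the $L^1$-norm correction theorem you are proving is obtained from completely different ingredients, namely conditions (A) and (B): the partial-sum operators are finite rank, hence compact in the sense that they turn weak convergence in $L^1$ into norm convergence, and they converge on $L^p$, $1<p<\infty$. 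The construction there replaces each step $a_k\ZI_{\Delta_k}$ of an approximating step function by $a_k|\Delta_k|\,\ZI_{G_m^{(k)}}/|G_m^{(k)}|$, a large multiple of the indicator of a small prescribed subset of the correction set, precisely so that the perturbation is \emph{weakly null} (with controlled $L^p$ norm); then compactness kills $\|S_n(\lambda_{m_j}^{(j)})\|_1$ for the early indices $n$, the $L^p$ basis property handles the late ones, and a Banach--Steinhaus bound in $L^p$ handles the current block. Your smoothing-of-jumps perturbation is small in $L^1$ but is not weakly null and is not designed to interact with any such mechanism.

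There is also a concrete uniformity obstruction to repairing your scheme as stated. The only available strong bound for a smoothed increment $h_\ell=g_\ell-g_{\ell-1}$ is of the form $\sup_n\|S_nh_\ell\|_1\lesssim\|h_\ell\|_\infty+V(h_\ell)$ (or via $\|h_\ell'\|_1$), and these quantities are governed by the jump sizes of the dyadic approximations of the arbitrary $f\in L^1$ divided by the widths of the smoothing windows; the windows are fixed in advance by the universal set $G_\varepsilon$, while the jumps are unbounded for a general integrable $f$, so the constants $C_\ell=\sup_n\|S_nh_\ell\|_1$ need not be summable no matter how you choose the scales, and the tail $\sum_{\ell>k}C_\ell$ cannot be made small. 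This is exactly why a correction argument for all of $L^1$ must exploit the freedom to redefine $f$ arbitrarily on $G_\varepsilon$ (redistributing mass onto prescribed small sets), rather than merely mollify the discontinuities of its step approximations near the endpoints.
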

Note that the full statement of this theorem in \cite{Grig2} provides also some control on the Fourier coefficients of the resulting function $g$.  
Grigoryan \cite{Grig3} extended the result of \trm {Grig2} for complete orthonormal systems of bounded functions. The following result of Grigoryan-Navasardyan is a version of \trm {Grig2} for Walsh system.
\begin{OldTheorem}[Grigoryan, Navasardyan, \cite{GrNa}]\label{GrNa}
	For any $\varepsilon>0$ there exists an open set $G_\varepsilon\subset [0,1]$ with $|G_\varepsilon|<\varepsilon$ such that for any $f\in L^1[0,1]$ one can find a function $g\in L^1[0,1]$, whose Walsh-Fourier series converges in $L^1$-norm, $\{g(x)\neq f(x)\}\subset G_\varepsilon$, and the sequence of absolute values of the Fourier-Walsh coefficients of $g$  is decreasing.
\end{OldTheorem}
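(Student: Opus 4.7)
The plan is to build the correction set $G_\varepsilon$ as a union of carefully chosen tiny sub-intervals inside dyadic intervals of increasing rank, and then, for any $f\in L^1[0,1]$, to assemble $g$ as a sum of Walsh polynomials whose spectra lie in pairwise disjoint dyadic blocks, so that the Walsh partial sums of $g$ telescope block by block and converge to $g$ in $L^1$. First fix a rapidly increasing sequence of scales $N_1<N_2<\dots$ with $\sum_k 2^{-N_k}<\varepsilon$ and, inside every dyadic interval of rank $N_k$, carve out one marked sub-interval of relative length at most $2^{-N_k}$. The union $G_\varepsilon$ of all these marks then satisfies $|G_\varepsilon|<\varepsilon$ and meets every sufficiently small dyadic interval, which is the feature that makes spectral correction possible.

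Given $f\in L^1$, apply the dyadic martingale decomposition $f=E_{N_1}f+\sum_k h_k$, where $h_k=E_{N_{k+1}}f-E_{N_k}f$ and $E_N$ is the conditional expectation onto the dyadic $\sigma$-algebra of rank $N$. The core lemma to prove is that, for each $k$, there exists a Walsh polynomial $P_k$ with Walsh spectrum in a preassigned block $[M_k,M_{k+1})$ such that $P_k=h_k$ on $[0,1]\setminus G_\varepsilon$ and $\|P_k\|_{L^1}\le C\|h_k\|_{L^1}$. Using the Walsh multiplication rule $w_m w_n=w_{m\oplus n}$, one translates a crude, $L^1$-controlled correction of $h_k$ into the assigned spectral block by multiplying by a Walsh character of high order; the control of the partial Walsh sums of $P_k$ inside that block reduces to a weak-$L^1$ bound for $\sup_N |S_N\chi_E|$ applied to indicator functions $\chi_E$ of the marked sub-intervals inside $G_\varepsilon$. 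This is precisely the ingredient highlighted in the abstract of the paper, and it is the main technical obstacle.

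Setting $g:=E_{N_1}f+\sum_k P_k$ gives $g=f$ off $G_\varepsilon$ and $\|g\|_{L^1}\le C\|f\|_{L^1}$. The pairwise disjointness of the spectra implies that every Walsh partial sum $S_n g$ equals a union of all full blocks of index at most $n$ plus an initial segment of exactly one $P_k$, and the $L^1$-norm of such a tail piece is again controlled by the same weak-type maximal bound; hence $S_n g\to g$ in $L^1$. For the monotone decreasing coefficient property, one refines the correction lemma to output only a prescribed list of Walsh indices inside each block: permuting Walsh characters inside a block (they span the same polynomial space) and choosing the block sizes so that $\max_n|\hat{P}_k(n)|$ decays geometrically in $k$ allows the global list $|\hat{g}(n)|$ to be arranged nonincreasing. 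The decisive difficulty is thus the correction lemma producing $P_k$ with simultaneous support, $L^1$, and spectral constraints, which rests entirely on the sharp weak-$L^1$ maximal Walsh bound for the specific indicator functions arising in the construction of $G_\varepsilon$.
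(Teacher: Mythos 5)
You are attempting strictly more than this paper does: Theorem~D is quoted from \cite{GrNa}, and the text only observes that its Theorem~1 recovers it ``partially'', i.e.\ without the monotonicity of the coefficients. Judged on its own terms, your argument has two genuine gaps. First, the decomposition/summability step fails. The scales $N_k$ must be fixed before $f$ (they determine $G_\varepsilon$), and for an arbitrary $f\in L^1$ the martingale differences $h_k=E_{N_{k+1}}f-E_{N_k}f$ satisfy only $\|h_k\|_1\to 0$, not $\sum_k\|h_k\|_1<\infty$. Since $\sum_k P_k$ no longer telescopes (the corrections on $G_\varepsilon$ destroy the cancellation), the bound $\|P_k\|_1\le C\|h_k\|_1$ does not make the series converge, so $g$ is not even defined in $L^1$. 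The mechanism the paper uses for its weaker version is different: decompose $f$ into a series of step functions taken from a fixed countable dense family, pass to a subsequence with $\|f_{n_k}\|_1\lesssim 4^{-k}$, and give the $k$-th correction the budget $\varepsilon 2^{-k}$; universality of $G_\varepsilon$ comes from the countability of the family, not from preassigned dyadic scales. Relatedly, for the $L^1$-convergence claim a weak-$L^1$ bound for $\sup_N|S_N\chi_E|$ is not the right ingredient: weak-type estimates control measures of level sets, not $\|S_nP_k\|_1$. In the paper the norm control of the one ``partially included'' block comes from uniform $L^p$-boundedness of the partial-sum operators combined with making each corrected piece small in $L^p$ (by shrinking the supporting intervals), plus compactness for the earlier blocks; the weak-type maximal estimate is used only for the a.e.\ statements, which Theorem~D does not assert. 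Also note that your core lemma itself needs an argument: multiplying by a high-order Walsh character does move the spectrum into a prescribed block, but it multiplies the function by $\pm1$ everywhere, so it does not preserve $P_k=h_k$ off $G_\varepsilon$; one has to build the spectral localization into the correction (as in the combinations $\ZI_{\Delta}-|\Delta|\,\ZI_{G\cap\Delta}/|G\cap\Delta|$ with structured $G$, cf.\ Proposition~2).

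Second, the monotonicity step is not correct as stated. ``Permuting Walsh characters inside a block'' means reassigning the coefficient values of $P_k$ to other characters, which changes the function and destroys $g=f$ off $G_\varepsilon$; the only harmless reorderings are multiplication by a fixed character $w_a$ (a dyadic translation $n\mapsto n\oplus a$ of the spectrum) or composition with $x\mapsto x\oplus t$ (a phase change), neither of which lets you sort the moduli. Moreover, a decreasing sequence of absolute values that ever reaches $0$ must vanish from then on, so unless $g$ is a Walsh polynomial \emph{all} of its coefficients must be nonzero and decreasing; the construction therefore has to fill every spectral gap with nonzero coefficients of prescribed, slowly decreasing moduli while retaining the $L^1$ control and the equality off $G_\varepsilon$. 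That is exactly the delicate part of Grigoryan--Navasardyan's proof, and it cannot be obtained by an after-the-fact permutation; it is also precisely why the present paper states that its Theorem~1 yields Theorem~D only without the monotonicity condition.
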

A likewise problem for almost everywhere convergence with a weaker monotonicity condition on the Fourier coefficients was considered in \cite{Grig4} (see also \cite{Grig1}). That is 
\begin{OldTheorem}[Grigoryan, \cite{Grig4}]\label{Grig4}
	For any $\varepsilon>0$ there exists an open set $G_\varepsilon\subset [0,1]$ with $|G_\varepsilon|<\varepsilon$ such that for any $f\in L^1[0,1]$ one can find a function $g\in L^1[0,1]$, whose Walsh-Fourier series is a.e. convergent, $\{g(x)\neq f(x)\}\subset G_\varepsilon$, and the sequence of absolute values of non-zero Fourier-Walsh coefficients of $g$  is decreasing.
\end{OldTheorem}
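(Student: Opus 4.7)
The plan is to exploit two structural features of the Walsh system: the dyadic projection identity $S_{2^n}^W f = E(f\mid \mathcal{F}_n)$, and the weak-$L^1$ estimate for the maximal Walsh partial-sum operator on indicators of dyadic sets that the paper's abstract singles out as the main ingredient. I would first construct the universal exceptional set $G_\varepsilon$ once and for all, independently of $f$, as a union of dyadic intervals chosen so that $|G_\varepsilon|<\varepsilon$ and so that, on $[0,1]\setminus G_\varepsilon$, maximal Walsh-Fourier sums of indicators of suitable subsets satisfy a weak-$L^1$ bound with constants summable over the layers of the construction. This is exactly what replaces Carleson's theorem in the argument.

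Given $f\in L^1[0,1]$, I would then apply a dyadic Calder\'on--Zygmund-type stopping-time decomposition at heights $\lambda_k\to\infty$ to write $f = \sum_k f_k$ on $[0,1]\setminus G_\varepsilon$, where each $f_k$ is a simple function, constant on dyadic intervals of some scale $n_k$, with $\|f_k\|_1$ decreasing rapidly in $k$, and with the exceptional set of the stopping time absorbed into $G_\varepsilon$. For each $f_k$ I build a Walsh polynomial $P_k$ whose spectrum is contained in a lacunary block $[N_k,M_k)$, with $N_{k+1}\gg M_k$, and which agrees with $f_k$ on $[0,1]\setminus G_\varepsilon$; the dyadic projection identity makes this possible because $f_k$ is a dyadic step function, so it can be expanded in Walsh characters of indices below $2^{n_k}$ and then shifted into $[N_k,M_k)$ using the group identity $w_j\cdot w_{N_k}=w_{j\oplus N_k}$ for appropriately chosen $N_k$. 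Set $g:=\sum_k P_k$. The agreement $g=f$ off $G_\varepsilon$ is automatic. The decreasing property of the sequence of non-zero $|\hat g(n)|$ is enforced by (i) permuting the active basis functions within $[N_k,M_k)$ via the XOR-group action so that inside each block the non-zero coefficients already decrease in index, and (ii) multiplying $P_k$ by a tiny factor $\rho_k$ so that its largest non-zero coefficient is less than the smallest of $P_{k-1}$.

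A.e.\ convergence of $S_n^W g$ on $[0,1]\setminus G_\varepsilon$ reduces, by the spectral lacunarity, to controlling $\sup_{N_k\le n<M_k}|S_n^W P_k|$; this is dominated by $\lambda_k$ times the maximal Walsh-Fourier sum of the indicator of the support of $f_k/\lambda_k$, which is precisely where the weak-$L^1$ indicator estimate from the first step enters, followed by a Borel--Cantelli argument over the summable constants. The hardest part, and the technical heart of the construction, is the simultaneous calibration of three constraints: the lacunarity gap $N_{k+1}/M_k$ must be large enough for the spectral-separation splitting of the partial sums to work; the rescaling $\rho_k$ must be small enough to enforce the global monotonicity of $|\hat g(n)|$; yet $P_k$ must still reproduce $f_k$ exactly off $G_\varepsilon$, and the weak-$L^1$ estimate on $G_\varepsilon^c$ must remain summable after the amplitude $\lambda_k$ is absorbed. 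Each requirement pushes the parameters in a different direction, so the construction must fix the schedule $(N_k,M_k,\rho_k,\lambda_k)$ before the decomposition of $f$, and verify a posteriori that the weak-$L^1$ estimate on indicators, supplied by the universal set $G_\varepsilon$, is strong enough to close the loop.
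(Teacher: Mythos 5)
First, a point of orientation: the paper does not prove this statement at all; it is quoted from Grigoryan \cite{Grig4}, and the author's own \trm{T1} is explicitly said to recover it only partially, namely \emph{without} the monotonicity of the nonzero coefficients. Your proposal attempts the full statement, and it is exactly on that extra part that it breaks down. Multiplying $P_k$ by a small factor $\rho_k$ to push its coefficients below those of $P_{k-1}$ destroys the correction property: off the exceptional set you then reproduce $\rho_k f_k$ rather than $f_k$, so $g=\sum_k\rho_kP_k$ no longer satisfies $\{g\neq f\}\subset G_\varepsilon$. Likewise, ``permuting the active basis functions within $[N_k,M_k)$ via the XOR-group action'' is not a harmless reindexing: the only spectrum permutations realized by a pointwise operation are $j\mapsto j\oplus N_k$, i.e.\ multiplication of the function by $w_{N_k}$, which is a $\pm1$-valued, rapidly oscillating factor; after such a shift $P_k$ agrees with $f_k$ only in absolute value, not pointwise, and a more general permutation inside the block changes the function in an uncontrolled way. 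The genuine content of Grigoryan's theorem is precisely the mechanism you are missing: each piece must be \emph{spread} over a very long spectral block, producing a polynomial with many nonzero coefficients of prescribed, nearly equal (hence arbitrarily small) modulus that still coincides with the piece off a small set; global monotonicity is then obtained by letting the prescribed ranges of moduli decrease from block to block, never by rescaling the polynomials. No such spreading lemma appears in your outline, and without it the calibration of $(N_k,M_k,\rho_k,\lambda_k)$ you describe cannot close.

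The a.e.-convergence step also has a gap. You claim $\sup_n|S_n P_k|$ is dominated by $\lambda_k$ times the maximal Walsh--Fourier sums of the indicator of the support $E_k$ of $f_k/\lambda_k$. That is false in general: $|S_n(x,f_k)|\le\lambda_k\int_{E_k}|D_n(x\oplus t)|\,dt$, and the right-hand side is not $\lambda_k S_n(x,\ZI_{E_k})$ because the kernel appears with an absolute value. The paper's route (\lem{L1} combined with \pro{P2}) is built to avoid exactly this: each piece $a_k\ZI_{\Delta_k}$ is replaced by an exact constant multiple of the indicator of a specially structured set $G_m\cap\Delta_k$ of the form \e{a60}, so that the weak-$L^1$ estimate for indicators applies verbatim; moreover \pro{P2} proves that estimate only for those ``uniformly distributed'' sets, not for the supports produced by an arbitrary Calder\'on--Zygmund stopping time. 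To follow your scheme you would have to either force each $f_k$ to be literally a constant times such a structured indicator, or prove a weak-type bound for the maximal sums of the $f_k$ themselves --- and the latter is the hard point that the structured sets are designed to bypass.
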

Grigoryan-Sargsyan \cite{GrSa} recently proved the analogous of \trm{Grig4} for the Vilenkin systems of bounded type.

In this note we consider similar problems for sequences of compact operators with additional properties that are common for the partial sum operators of Fourier series. To state the main results we need some definitions and notations. 
Intervals in our definitions are the intervals of the form $[a,b)\subset [0,1)$. A set $G\subset [0,1)$ is said to be a finite-interval set if it is a union of finite number of intervals. The indicator function of a set $G$ will be denoted by $\ZI_G$.
\begin{definition}
	A sequence of functions $f_n\in L^1(0,1)$ is said to be weakly convergent to a function $f\in L^1(0,1)$ if 
	\begin{equation*}
\lim_{n\to\infty}\int_0^1f_ng=\int_0^1fg
	\end{equation*}
	for any $g\in L^\infty(0,1)$. We denote this relation by  $f_n\overset{w}{\to} f$.
\end{definition}
\begin{definition}
	A bounded linear operator $U: L^1(0,1)\to L^1 (0,1)$ is said to be  compact if $\|U(f_n)-U(f)\|_1\to 0$ whenever $f_n\overset{w}{\to} f$.
\end{definition}
\begin{definition}
	A countable family $\zI$ of intervals is said to be basis if  
	
	1) $[0,1)\in \zI$,
	
	2) for any $\Delta=[a,b)\in \zI$ there are infinitely many integers $l>0$ such that $[a+|\Delta|(j-1)/l,a+|\Delta|j/l)\in\zI$, $j=1,2,\ldots, l$. 
	
\end{definition}

For a sequence of bounded linear operators
\begin{equation}\label{oper}
U_n: L^1(0,1)\to L^\infty (0,1),\quad n=1,2,\ldots,
\end{equation}
we denote 
\begin{equation*}
U^*f(x)=\sup_{n}|U_nf(x)|.
\end{equation*}
We shall consider operator sequences \e {oper} satisfying the following properties, where $1<p<\infty$:
\begin{enumerate}
	\item [(A)] each $U_n$ is a compact operator,
	\item [(B)] $\|U_n(f)-f\|_p\to 0$ as $n\to\infty$ for every $f\in L^p$,
	\item [(C)] $U_n(\ZI_{G})$ converges almost everywhere for any interval $G$,
	\item [(D)] for any $0<\varepsilon<1$ there is a sequence of finite-interval sets $G_l=G_l(\varepsilon)\subset[0,1)$, $l=1,2,\ldots,$ such that  
	\begin{align}
	&\alpha\varepsilon\le |G_l|\le \varepsilon,\label{a58}\\
	&\, |G_l|^{-1}\cdot\ZI_{G_l}\overset{w}{\to}\ZI_{[0,1)}  \text{ as } l\to\infty,\label{a27}\\
	\end{align}
	and there is a basis $\zI$ so that for any $\Delta\in\zI$ and $\lambda>0$ we have
	\begin{align}
	&\, \lambda\cdot |\{U^*(\ZI_{G_l\cap \Delta})>\lambda\}|\le \beta\cdot|G_l\cap\Delta|,\quad l=1,2,\ldots,\label{a23}
	\end{align}
where $0<\alpha<1 $ , $\beta>0$ are  constant depended only on $U_n$.
\end{enumerate}
\begin{theorem}\label{T1}
	Let an operator sequence  $U_n:L^1(0,1)\to L^\infty (0,1)$ satisfy the properties (A), (B) and $\varepsilon >0$. Then there is an open set $G_\varepsilon\subset (0,1)$ such that $|G_\varepsilon|<\varepsilon$ and for every function $f\in L^1(0,1)$  one can find a $g\in L^1(0,1)$ with $\{f(x)\neq g(x)\}\subset G$ and  satisfying
	\begin{equation*}
	\|U_ng -g\|_1\to 0.
	\end{equation*}
	If $U_n$ satisfies also conditions $(C)$ and (D), then we will additionally have 
	\begin{equation*}
	U_ng(x)\to g(x)\text { almost everywhere}.
	\end{equation*}
\end{theorem}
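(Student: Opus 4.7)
The plan is to construct $g$ as an $L^1$-convergent series $g=\sum_{k\ge 1}g_k$ of simple functions supported on intervals of the basis $\zI$, arranged so that $\sum g_k$ agrees with $f$ on $[0,1)\setminus G_\varepsilon$. First, from (B) and the uniform boundedness principle I deduce $\sup_n\|U_n\|_{L^p\to L^p}<\infty$; combined with (B) itself this gives $U_n s\to s$ in $L^1$ for every simple function $s$ built from indicators of intervals in $\zI$. Under (C) it additionally gives a.e.\ convergence, since $U_n\ZI_\Delta\to \ZI_\Delta$ a.e.\ by (C) combined with (B), and each such $s$ is a finite linear combination of these indicators.

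Next I choose the correction set $G_\varepsilon$. From (D) with parameters $\varepsilon_k$ forming a sequence summing to less than $\varepsilon$, I take nested sets $G_{l_k}(\varepsilon_k)$ with $l_k$ growing so fast that the weak-density estimate \e{a27} forces $|G_{l_k}\cap\Delta|/|\Delta|$ to be arbitrarily close to $|G_{l_k}|$ for every basis interval $\Delta$ at scale at most $k$. Setting $G_\varepsilon=\bigcup_k G_{l_k}$, one has $|G_\varepsilon|<\varepsilon$ and the weak-$L^1$ maximal bound \e{a23} is available at every scale of the forthcoming construction.

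The heart of the argument is the iterative construction of the $g_k$'s. Let $\pi_k$ be the $k$-th generation partition of $[0,1)$ into basis intervals, $E_k$ the associated conditional expectation, and $d_k=E_k f-E_{k-1}f$ the martingale increment (with $E_0=0$), so that $f=\sum_k d_k$ in $L^1$ and a.e. For each $k$, I modify $d_k$ to produce $g_k$ as follows: wherever the pointwise value $|d_k|$ exceeds a threshold $M_k$, subtract the excess off $G_\varepsilon$ and redistribute it over the pieces $G_{l_k}\cap\Delta$ of $G_\varepsilon$ using rescaled indicators $c_\Delta\,\ZI_{G_{l_k}\cap\Delta}$, with $c_\Delta$ selected so that the mean of $g_k$ on each $\Delta\in\pi_k$ matches that of $d_k$. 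This leaves $g_k=d_k$ outside $G_\varepsilon$, keeps $g_k$ a simple function on $\zI$, and with $M_k$ chosen appropriately makes $\sum_k\|g_k\|_p<\infty$. For the $L^1$ part of the theorem, the series $g=\sum_k g_k$ then converges in $L^p$ and hence in $L^1$; the uniform $L^p$-bound on $U_n$ combined with dominated convergence over $k$ yields $\|U_ng-g\|_1\to 0$.

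The most delicate step, which uses (C) and (D) in an essential way, is the a.e.\ conclusion. The residue pieces $c_\Delta\ZI_{G_{l_k}\cap\Delta}$ generally have $L^\infty$-norms blowing up with $k$ (since $|G_{l_k}\cap\Delta|$ is small relative to $|\Delta|$), so one cannot simply sum bounds on $\sup_n|U_ng_k|$ in $L^\infty$. Instead, for each piece the weak-$L^1$ estimate \e{a23} furnishes $\lambda|\{U^*(c_\Delta\ZI_{G_{l_k}\cap\Delta})>\lambda\}|\le \beta\|c_\Delta\ZI_{G_{l_k}\cap\Delta}\|_1$; tuning $M_k$ so that the total $L^1$-mass of the residues is summable, a Borel--Cantelli argument gives $\sup_n|U_n(\sum_{k\ge K}g_k)|\to 0$ a.e.\ as $K\to\infty$, and combined with the a.e.\ convergence $U_ng_k\to g_k$ for each fixed $k$ this yields $U_ng\to g$ a.e. The main obstacle is coordinating the choice of thresholds $M_k$, set parameters $\varepsilon_k,l_k$, and basis scales so that both the $L^1$ and the a.e.\ conclusions hold simultaneously without the redistributed mass overwhelming the $L^p$-norms of the $g_k$; this coordination is precisely the point where the weak-density property \e{a27} enters quantitatively.
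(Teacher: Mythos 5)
Your plan has a fatal structural gap in the $L^1$ part. You propose to choose the thresholds $M_k$ so that $\sum_k\|g_k\|_p<\infty$, conclude $g\in L^p$, and then obtain $\|U_ng-g\|_1\to0$ directly from (B). But the theorem requires $g=f$ outside $G_\varepsilon$ for an \emph{arbitrary} $f\in L^1$; if the restriction of $f$ to $[0,1)\setminus G_\varepsilon$ fails to be in $L^p$ (which happens for plenty of $f\in L^1$), then no admissible $g$ can lie in $L^p$, so the claim $\sum_k\|g_k\|_p<\infty$ is impossible in general. The redistribution step even works against you: moving the excess mass of $d_k$ onto the much smaller sets $G_{l_k}\cap\Delta$ increases the $L^p$ norm (mass $\mu$ spread over measure $\sigma$ has $L^p$ norm $\mu\,\sigma^{1/p-1}$); and since each martingale increment $d_k$ is a bounded step function, taking $M_k\ge\|d_k\|_\infty$ makes your construction do nothing, i.e.\ $g=f$, for which the conclusion generally fails. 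Symptomatically, your argument never uses the compactness hypothesis (A), which is the engine of the paper's proof: there each step function is replaced by weak-null ``balanced'' blocks $a_k\bigl(\ZI_{\Delta_k}-|\Delta_k|\ZI_{G_m^{(k)}}/|G_m^{(k)}|\bigr)$, and compactness converts the weak convergence \eqref{a12} into the estimate \eqref{l}, making all later blocks nearly invisible (in $L^1$) to the earlier operators $U_n$; for each $n$ only one block is ``active'' and it is bounded through its small-support $L^p$ estimate. That is what yields $\|U_ng\|_1\lesssim\|f\|_1$ for a $g$ that is merely in $L^1$. (A further mismatch: the first conclusion of the theorem assumes only (A),(B), yet your construction of $G_\varepsilon$ already invokes (D); the paper handles that case with the explicit sets \eqref{a29} and no weak-type input.)

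The a.e.\ part has the same missing ingredient. The only maximal-type hypothesis available is \eqref{a23}, and it applies solely to indicators of the special sets $G_l\cap\Delta$; your $g_k$ consists of a redistributed residue \emph{plus} the (truncated) increment $d_k$, and nothing assumed controls $\sup_n|U_n(\cdot)|$ of the latter, so weak-type bounds plus Borel--Cantelli only reach the residues, not $\sup_n\bigl|U_n\bigl(\sum_{k\ge K}g_k\bigr)\bigr|$. In the paper this is precisely where the interleaved choice of indices $m_1,N_1,m_2,N_2,\ldots$ enters: for $n\in(N_{k-1},N_k]$ the earlier blocks are handled through (C) via \eqref{lll}, the later blocks through compactness via \eqref{l} (the $L^1$-small term $A_3$), and the single active block through the weak-type bound \eqref{a46}; together these give the lemma's maximal estimate \eqref{a11}, which is then summed along a telescoping approximation $f=\sum_k f_{n_k}$ with $\|f_{n_k}\|_1\lesssim 4^{-k}$ drawn from a fixed dense family of step functions, so that the correction set is chosen before $f$. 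Your coordination of $M_k,\varepsilon_k,l_k$ does not substitute for this mechanism, so neither the $L^1$ nor the a.e.\ conclusion is actually established.
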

A slight change in the statement of property (D) allows us to prove the full analogous of Menshov's \trm {Menshov-2} for sequences of compact operators. So in the next theorem instead of property (D) we shall suppose that
\begin{enumerate}
	\item [(D*)]  for every $0<\varepsilon<1$ and everywhere dense open set $U$ there is a sequence of finite-interval sets $G_l\subset U$, $l=1,2,\ldots,$ such that $|G_l|\le \varepsilon$ and there hold relations \e {a27} and \e {a23}.
\end{enumerate}
\begin{theorem}\label{T2}
	Let an operator sequence  $U_n:L^1(0,1)\to L^\infty (0,1)$ satisfy the properties (A), (B), (C), and (D*) and $U\subset [0,1)$ be an everywhere dense open  set. Then for every $f\in L^1(0,1)$  there is a function $g\in L^1(0,1)$ such  that $g(x)=f(x)$, $x\in G$, and 
	\begin{equation*}
	U_ng(x)\to g(x)\text { almost everywhere}.
	\end{equation*}
\end{theorem}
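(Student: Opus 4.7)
The plan is to mirror the iterative construction used for Theorem~\ref{T1}, with the only structural change being that the finite-interval sets on which successive corrections live are drawn from property (D*) applied to the prescribed dense open set $U$, rather than from the unrestricted property (D). Accordingly I would build $g$ as a telescoping series $g = \lim_k g_k$, where each $g_k$ is a step function, each increment $h_k := g_k - g_{k-1}$ is supported in a finite-interval subset of $U$, the norms $\|h_k\|_1$ are geometrically small, and $g_k$ agrees with $f$ outside the accumulated correction set (so in particular $\{g\neq f\}\subset U$).

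First I would fix an increasing sequence of approximating step functions $s_k\to f$ in $L^1$ with $\|s_k - s_{k-1}\|_1\le 2^{-k}$, each $s_k$ a finite linear combination of indicators of intervals from the basis $\zI$ supplied by (D*). At step $k$, I would apply (D*) to $U$ with a small parameter $\varepsilon_k$ to obtain finite-interval sets $G_l^{(k)}\subset U$ satisfying $|G_l^{(k)}|^{-1}\ZI_{G_l^{(k)}}\overset{w}{\to}\ZI_{[0,1)}$ and the weak-type bound \eqref{a23}. Using compactness (property (A)) together with this weak convergence, I would choose $l_k$ so large that for every basis interval $\Delta$ appearing in $s_k-s_{k-1}$ and every $n\le N_k$ (with $N_k\uparrow\infty$ chosen rapidly), the quantity $\|U_n(|G_{l_k}^{(k)}\cap\Delta|^{-1}\ZI_{G_{l_k}^{(k)}\cap\Delta}) - U_n(\ZI_{\Delta})\|_1$ is tiny. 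This permits replacing each $\ZI_{\Delta}$ appearing in $s_k-s_{k-1}$ by a scaled copy supported in $U$, yielding the correction $h_k$.

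To upgrade $L^1$-convergence (which follows from property (B) applied to $g$) to almost-everywhere convergence, I would invoke the weak-$L^1$ bound \eqref{a23} on each piece $U^*\ZI_{G_{l_k}^{(k)}\cap\Delta}$, apply a Kolmogorov-type summation to control $\sum_k U^* h_k$ in weak-$L^1$, and use (C) together with linearity on step functions to obtain a.e.\ convergence $U_n g_K\to g_K$ for each fixed partial sum $g_K$. Splitting $U_n g - g = U_n(g - g_K) - (g - g_K) + (U_n g_K - g_K)$ and letting $K\to\infty$ after $n\to\infty$ then yields $U_n g(x)\to g(x)$ almost everywhere.

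The main technical difficulty is coordinating the parameters $l_k$, $N_k$, $\varepsilon_k$ so that a \emph{single} choice simultaneously (i) makes the weak approximation accurate enough under the first $N_k$ operators, (ii) keeps $\sum_k U^* h_k$ summable in weak-$L^1$, and (iii) preserves the inclusion of every correction inside $U$ with controlled $L^1$-mean. Compactness (A) is precisely what converts the weak convergence supplied by (D*) into strong $L^1$-convergence under each fixed $U_n$, while the weak-type bound handles the maximal-operator tail; wiring these together so that the telescoping series produces a $g\in L^1(0,1)$ satisfying both $\{g\neq f\}\subset U$ and $U_n g\to g$ a.e.\ is the heart of the argument.
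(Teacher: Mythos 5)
Your overall route is the paper's own (approximate $f$ by $\zI$-step functions, correct each increment inside $U$ using the sets from (D*), use compactness (A) to make each new correction almost invisible to the finitely many operators already fixed, control the tail by the weak-type bound \eqref{a23}, and handle finite heads by (B) and (C)), but two of your statements are genuinely off. First, the correction mechanism as written fails: if you literally \emph{replace} each $\ZI_{\Delta}$ occurring in $s_k-s_{k-1}$ by a scaled indicator supported in $U$, then every corrected increment, hence $g$ itself, is supported in $U$, so $g\equiv 0$ off $U$; since the complement of a dense open set may have positive measure, the requirement $g=f$ outside $U$ is then violated for general $f$. What is needed (and what Lemma \ref{L2} does through $\lambda_m^{(k)}=a_k\bigl(\ZI_{\Delta_k}-|\Delta_k|\,|G_m^{(k)}|^{-1}\ZI_{G_m^{(k)}}\bigr)$ with $G_m^{(k)}=G_m\cap\Delta_k\subset U$) is to \emph{keep} the step piece and subtract a scaled indicator of a subset of $U$, so that the modification is confined to $U$ while the whole difference tends weakly to $0$; in particular your two desiderata --- ``$h_k$ is supported in a finite-interval subset of $U$'' and ``$g_k$ agrees with $f$ outside the accumulated correction set'' --- cannot both hold: only the correction $g_k-s_k$, not the increment $h_k$, is supported in $U$. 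Second, the parenthetical claim that $L^1$-convergence of $U_ng$ ``follows from property (B) applied to $g$'' is wrong: (B) concerns $L^p$, $p>1$, while $g$ is merely in $L^1$; moreover under (D*) the lower bound in \eqref{a45} is unavailable, the $L^p$ control of the corrections is lost, and Theorem \ref{T2} (unlike Theorem \ref{T1}) does not assert norm convergence at all. This does not hurt your a.e.\ argument, but it signals a misreading of what (B) gives.

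The substantive gap is that the step you defer as ``the main technical difficulty'' is in fact the entire proof, namely the analogue of Lemma \ref{L2}: a weak-type estimate $t\,|\{U^*(h_k)>t\}|\lesssim\|s_k-s_{k-1}\|_1$ valid for all $t$ above a threshold $\eta_k$ chosen \emph{in advance}, with constants independent of $k$ and of $l_k$. Obtaining it is not mere parameter bookkeeping: (i) besides \eqref{a23} for the pieces $\ZI_{G_{l}\cap\Delta}$ you also need the same weak-type bound for $U^*(\ZI_{\Delta})$, $\Delta\in\zI$, since the step part of each increment survives in $h_k$; the paper derives this separately from (A), (C) and \eqref{a23} in the Remark before Lemma \ref{L1}, and your sketch never addresses it; (ii) one needs the inductive choice of indices $m_1,N_1,m_2,N_2,\ldots$ realizing \eqref{ll}--\eqref{l}, which yields the splitting $U^*g\le A_1+A_2+A_3+A_4$ and the bounds \eqref{a19}--\eqref{a15}; the restriction $t>\eta$ enters exactly through the $A_2$ term, so the thresholds must be fixed before the corresponding corrections are built; (iii) in the final Kolmogorov-type summation the levels $t_k$ must satisfy both $t_k\ge\eta_{n_k}$ and $\sum_k t_k\le\lambda/2$ while $\sum_k\|s_k-s_{k-1}\|_1/t_k$ has small tails --- note that with your rate $\|s_k-s_{k-1}\|_1\le 2^{-k}$ and the natural choice $t_k=\lambda 2^{-k-1}$ the resulting series of weak-type bounds does not even converge; the paper arranges $\|f_{n_k}\|_1\lesssim 4^{-k}$ in \eqref{hk2} precisely so that these levels work. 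Until these points are carried out, your proposal is a restatement of the paper's strategy with its central lemma missing.
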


\begin{corollary}\label{C1}
	Let $f\in L^1[0,1]$ and $G\subset [0,1]$ be an everywhere dense open set. Then there is a function $g\in L^1[0,1]$, whose Fourier series in Walsh system (in a given bounded type Vilenkin system) converges a.e. and $\{g(x)\neq f(x)\}\subset G$.
\end{corollary}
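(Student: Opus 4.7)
I would deduce \cor{C1} directly from \trm{T2} applied to the sequence $U_n=S_n$ of partial-sum operators of the Walsh (or bounded-type Vilenkin) Fourier series. What is required is the verification of hypotheses (A)--(C) and (D*); the corollary then results at once from the conclusion of \trm{T2} applied to the given $f$ and to $U=G$.

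Hypotheses (A)--(C) are standard in this setting. Each $S_n$ has finite-dimensional range spanned by the first $n$ bounded characters, hence $S_n\colon L^1\to L^\infty\subset L^1$ is finite-rank; the weak convergence $f_m\overset{w}{\to} f$ implies convergence of the finitely many relevant Fourier coefficients, so $\|S_n f_m - S_n f\|_1\to 0$, which gives (A). Condition (B) is the classical $L^p$-convergence theorem of Paley for the Walsh system, extended to bounded Vilenkin systems by Schipp--Young--Wade. For (C), $\ZI_{[a,b)}\in L^2$, and a.e.\ convergence of its Walsh/Vilenkin--Fourier series follows from the Billard (respectively Schipp) analogue of Carleson's theorem; alternatively one invokes the Dini-type test applied to $\ZI_{[a,b)}\in BV$ and the Vilenkin Dirichlet kernel.

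The decisive point, highlighted already in the abstract, is the verification of (D*). As basis $\zI$ I would take all Vilenkin-adic (dyadic in the Walsh case) intervals; both axioms of a basis are trivially satisfied. Given a dense open $U\subset[0,1)$ and $\varepsilon>0$ I would construct $G_l\subset U$ as a finite disjoint union of small Vilenkin-adic intervals distributed so that in every $\Delta\in\zI$ the density $|G_l\cap\Delta|/|\Delta|$ tends to $|G_l|$ as $l\to\infty$. This forces the weak convergence \e{a27}, because Vilenkin-adic step functions are weak-$*$ dense in $L^\infty$. The weak-$L^1$ inequality \e{a23} then rests on the Paley decomposition of the Vilenkin Dirichlet kernel: at scale indices $n=m_k$ the operator $S_{m_k}$ coincides, on Vilenkin-adic step functions, with the conditional expectation onto the $k$-th Vilenkin $\sigma$-algebra, so $\sup_k|S_{m_k}\ZI_{G_l\cap\Delta}|$ is dominated by the martingale maximal function and satisfies $L^1\to L^{1,\infty}$ with an absolute constant; for the intermediate indices the Dirichlet kernel differs from the conditional-expectation kernel by an additive character times a fixed-scale remainder, which affects only finitely many Vilenkin scales since $\ZI_{G_l\cap\Delta}$ is constant on sufficiently fine Vilenkin cells.

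The principal difficulty is the joint construction of $G_l$ fulfilling three incompatible-looking demands: (i) $G_l\subset U$, where $U$ may be arbitrarily thin; (ii) equidistribution inside every $\Delta\in\zI$, which forces $G_l$ to be spread out at every Vilenkin scale; and (iii) a Vilenkin-adic block structure, needed so that the conditional-expectation identity is applicable. I would produce such $G_l$ by a Cantor-type inductive procedure that, at successive scales $k=1,2,\ldots$, inserts small disjoint Vilenkin-adic sub-intervals of total controlled mass into $U\cap\Delta$ for every $\Delta$ of length $m_k^{-1}$, with the density of $U$ ensuring that room is always available and the book-keeping keeping $|G_l|$ in $[\alpha\varepsilon,\varepsilon]$ throughout.
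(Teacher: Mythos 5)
Your top-level route is the same as the paper's: \cor{C1} is obtained by applying \trm{T2} to the Walsh (Vilenkin) partial-sum operators, with (A)--(C) treated as standard and the whole weight of the argument resting on (D*), i.e.\ on the uniform weak-$L^1$ bound \e{a23} for $S^*\ZI_{G_l\cap\Delta}$. It is exactly at this point that your proposal has a genuine gap. The mechanism you offer --- dyadic conditional expectations controlled by the martingale maximal function, plus the remark that the intermediate Dirichlet kernels contribute ``an additive character times a fixed-scale remainder'' affecting ``only finitely many scales'' because $\ZI_{G_l\cap\Delta}$ is a fine dyadic step function --- does not produce the constant $\beta$ independent of $l$ that \e{a23} requires. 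For $2^k\le m<2^{k+1}$ one has $D_m=D_{2^k}+w_{2^k}D_{m-2^k}$, so the ``remainder'' is a full partial sum of lower order, not a fixed-scale object; unwinding it runs through all $\approx l+r$ scales on which $\ZI_{G_l\cap\Delta}$ lives, and any argument that merely counts these scales (Rademacher--Menshov type estimates, summing weak bounds scale by scale, etc.) yields a constant that grows with $l$, which is fatal since $l\to\infty$ in \e{a27}. Nor can the structure of $G_l$ be ignored: $S^*$ is not of weak type $(1,1)$ on $L^1$, so the special form of the sets must enter quantitatively, and your sketch never does this.

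The paper's \pro{P2} shows what that quantitative input is: $G_l$ is taken to be the union, over all cells of length $2^{-l}$, of a tiny dyadic subinterval sitting in the \emph{same relative position} in each cell, so that $\ZI_{G_l}=2^{-n-r}D_{2^r}(2^l x)\,D_{2^n}\ll(x\oplus\tfrac{k-1}{2^n}\rr)$; its Walsh spectrum is then a union of $2^r$ blocks of length $2^n$ located at the lacunary-type positions $j2^l$, and every partial sum $S_m$ splits into a completed-block part plus at most one partial block, each dominated pointwise by an $m$-independent kernel ($2^{-r}\ZI_\Delta(x)\phi(2^l x)$ and $2^{-n-r}|x-(k-1)/2^n|^{-1}$) whose weak-$L^1$ norm is $\lesssim|G_l|$. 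In \pro{P22} the density of $U$ is used precisely to realize this rigid periodic configuration inside $U$ (choose $x$ with $x+j2^{-l}\in U$ for all $j$, then take $G_l=\Delta_t^{(r)}(2^l)$), and then $G_l\cap\Delta$ is again of the form covered by \pro{P2} for every dyadic $\Delta$. Your more flexible ``Cantor-type'' insertion of subintervals at varying positions in $U\cap\Delta$ destroys exactly this product/lacunary structure, and you supply no substitute proof of \e{a23} for such sets; as it stands, the decisive estimate is unproved.
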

\begin{corollary}\label{C2}
	Let $\{\phi_n\in L^\infty(0,1):\, n=1,2,\ldots\}$ be a basis in $L^p(0,1)$, $1<p<\infty$,   and $\varepsilon>0$. Then there exists an open set $G\subset (0,1)$ with $|G|<\varepsilon$ such that for any $f\in L^1(0,1)$ one can find a function $g\in L^1(0,1)$, whose Fourier series in $\{\phi_n\}$ convergence in $L^1$-norm and $\{g(x)\neq f(x)\}\subset G$.
\end{corollary}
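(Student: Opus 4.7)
The plan is to specialize \trm{T1} to the partial-sum projections
$U_nf=\sum_{k=1}^n c_k(f)\phi_k$ of the expansion in the basis $\{\phi_n\}$, where $c_k$ is the coefficient functional on $L^p$ given by the biorthogonal system $\{\psi_k\}\subset L^{p/(p-1)}$. Each $U_n$ is a finite-rank projection onto $\operatorname{span}\{\phi_1,\dots,\phi_n\}\subset L^\infty$, and the $L^1$-norm convergence $\|U_ng-g\|_1\to 0$ in the conclusion of \trm{T1} is precisely the $L^1$-convergence of the Fourier series of $g$ in $\{\phi_n\}$. So once conditions (A) and (B) are verified, the corollary follows at once.

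Condition (B) is a direct rephrasing of the basis property: $\|U_nf-f\|_p\to 0$ for every $f\in L^p$, which is exactly what is assumed. For condition (A), since each $U_n$ has a fixed finite-dimensional range, the key point is that for any $L^1$-weakly convergent sequence $f_j\overset{w}{\to}f$, each coefficient $c_k(f_j)$ tends to $c_k(f)$; since norm and weak convergence coincide on the finite-dimensional $\operatorname{span}\{\phi_1,\dots,\phi_n\}$, this coordinate-wise convergence forces $\|U_nf_j-U_nf\|_1\to 0$, which is the compactness required by the paper's definition.

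With (A) and (B) in place, \trm{T1} delivers the uniform open set $G\subset(0,1)$ with $|G|<\varepsilon$ and, for every $f\in L^1$, a function $g\in L^1$ coinciding with $f$ on $G^c$ and satisfying $\|U_ng-g\|_1\to 0$, which is exactly the assertion of the corollary.

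The main technical obstacle is that the biorthogonal functions $\psi_k$ lie a priori only in $L^{p/(p-1)}$ and need not be essentially bounded, so the functionals $c_k$ are not automatically continuous with respect to the weak $L^1$-topology and $U_n$ need not be a bounded operator $L^1\to L^\infty$ as demanded in the setup \e{oper}. Overcoming this is the delicate part of the argument; I expect it to be handled either by a preliminary truncation absorbing the unbounded part of $f$ into a slight enlargement of the correcting set $G$, or by verifying that the modification $g$ constructed in the proof of \trm{T1} automatically lies in $L^p$ (which is typical in Menshov-type constructions, where $g$ is built from $L^\infty$ ingredients) so that its expansion in $\{\phi_n\}$ is unambiguously defined and the conclusion $\|U_n g - g\|_1 \to 0$ transfers to $L^1$-norm convergence of the actual Fourier series.
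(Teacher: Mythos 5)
Your route is the paper's own: it deduces \cor{C2} immediately from \trm{T1}, with the partial-sum projections of the basis expansion playing the role of $U_n$ and conditions (A) and (B) checked exactly as in your first two paragraphs (finite rank plus weak-to-norm continuity of the coefficient functionals, and the basis property in $L^p$). As for your closing caveat, the paper does nothing further --- no truncation and no claim that $g\in L^p$ --- because invoking \trm{T1} already presupposes that the operators act boundedly from $L^1$ to $L^\infty$, i.e.\ that the coordinate functionals are given by bounded functions (as in the bounded complete orthonormal setting of \cite{Grig3} which the corollary extends, and as is needed even to make sense of the Fourier coefficients of an $L^1$ function); under that reading your verification of (A) and (B) already completes the argument.
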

It is well known that the partial sum operators of Fourier series in classical orthogonal systems satisfy the properties (A), (B) and (C), while the weak-$L^1$-condition \e {a23} is more delicate. We will see in the last section that properties (D) and (D*) are satisfied for trigonometric, Walsh and for the bounded type Vilenkin systems. 
The proof of those properties are based on the corresponding propositions (\pro{P1}, \pro{P2}) showing weak type estimates for the maximal partial sum operators of indicator functions of "uniformly distributed"  finite-interval sets. These results are interesting itself.  The trigonometric case is more delicate.  These propositions clarify the main ingredient, which is important in the study of such "correction" theorems. 

Hence, \cor {C1} immediately follows from the combination of \trm {T2} and \pro{P22}.  Likewise, the combination of \trm {T2} and \pro {P11} implies Menshov's \trm {Menshov-2}.

As for \cor {C2}, which is the extension of the analogous theorem for complete orthonormal systems from \cite {Grig3}, it immediately follows from \trm {T1}. 

Finally, note that \trm {T1} partially implies \trm {Grig2}, \trm {GrNa},  \e {Grig4} as well as some other results of papers \cite {Grig1, Grig3, GrSa}, without claiming the monotonicity conditions of the Fourier coefficients. 

\section{Proof of Theorems}
Before to state the main lemma we will need the following:

\begin{remark}
	An example of sets $G_l(\varepsilon)$ satisfying \e {a58} and \e {a27} is very simple. One can chose 
	\begin{equation}\label{a29}
	G_l(\varepsilon)=\bigcup_{k=0}^{l-1}\left[\frac{k}{l},\frac{k+\varepsilon}{l}\right).
	\end{equation}
\end{remark}

\begin{remark}
	If the operators $U_n$ satisfy properties (A), (C) and sequence of finite interval sets $G_n$ satisfy \e {a27} and  \e {a23}, then we will have the same weak type inequality \e {a23} for any $G\in\zI$. Indeed, given  $\lambda>0$, using a.e. convergence of $U_n(\ZI_G)$, one can find an integer $m$ such that
\begin{equation}\label{a43}
|\{\sup_{n>m}\ll|U_n(\ZI_{G})\rr|>\lambda\}|<|G|/\lambda.
\end{equation}
The compactness of $U_n$ and the weak convergence property \e {a27} easily yields  
\begin{align*}
&|G_k|^{-1}\cdot|G_k\cap G|\to |G|\text { as }k\to\infty,\\
&\ll\|U_n\ll(|G_k|^{-1}\cdot \ZI_{G_k\cap G}\rr)-U_n(\ZI_G)\rr\|_1\to 0\text { as }k\to\infty. 
\end{align*}
Thus, applying also \e {a23},  we get
\begin{align}\label{a44}
|\{\sup_{1\le n\le m}\ll|U_n(\ZI_{G})\rr|>\lambda\}|&\le \limsup_{k\to\infty}\ll|\ll\{\sup_{1\le n\le m}\ll|U_n\ll(\frac{\ZI_{G_k\cap G}}{|G_k|}\rr)\rr|>\lambda\rr\}\rr|\\
&\le  \limsup_{k\to\infty}\ll|\ll\{U^*\ll(\frac{\ZI_{G_k\cap G}}{|G_k|}\rr)>\lambda\rr\}\rr|\\
&\le \limsup_{k\to\infty}\left(\frac{\beta|G_k|^{-1}|G_k\cap G|}{\lambda}\right)= \frac{\beta|G|}{\lambda}.
\end{align}
Combining \e {a43} with this, we will get 
\begin{equation*}
|\{U^*(\ZI_{G})>\lambda\}|\lesssim|G|/\lambda.
\end{equation*}
\end{remark}
Here and below the notation $a\lesssim b$ will stand for the inequality $a\le c\cdot b$, where $c>0$ is a constant depended only on the parameters of the operator sequence $U_n$ that can appear in the statements of properties (A)-(D*). We shall say $f$ is a step function, if it can be written in the form
\begin{equation}\label{step}
f(x)=\sum_{k=1}^la_k\ZI_{\Delta_k}(x),\quad a_k\neq 0,\quad k=1,2,\ldots, l,
\end{equation}
where $\Delta_k$ are are pairwise disjoint intervals and we say it is a $\zI$-step function if each $\Delta_k$ is from a given basis $\zI$. 
\begin{lemma}\label{L1}
Let a sequence of bounded linear operators \e {oper} satisfy conditions (A) and (B). Then for any choice of numbers $0<\varepsilon, \eta<1$ and a step function $f(x)$ there is a step function $g(x)$ such that 
\begin{align}
&|\{g(x)\neq f(x)\}|\le \varepsilon,\label{a47}\\
&\|U_n(g)\|_1\lesssim\|g\|_1\le 2 \|f\|_1,\quad n=1,2,\ldots .\label{a6}
\end{align}
If in addition $U_n$ satisfies (C) , (D) and $f$ is a $\zI$-step function, then we will also have 
\begin{equation}\label{a11}
 t\cdot |\{U^*(g)>t\}|\lesssim \|f\|_1,\quad \eta<t<1.
\end{equation}
\end{lemma}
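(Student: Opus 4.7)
The plan is to build $g$ block by block from the decomposition $f=\sum_{k=1}^{l} a_k \ZI_{\Delta_k}$ by modifying each summand on a small subset $E_k\subset\Delta_k$ of relative measure $\tau\le\varepsilon$. The engine controlling $\|U_n g\|_1$ uniformly in $n$ is the estimate $C_p:=\sup_n\|U_n\|_{L^p\to L^p}<\infty$, which follows from property (B) by the uniform boundedness principle and yields $\|U_n h\|_1\le\|U_n h\|_p\le C_p\|h\|_p$ for every $h\in L^p$.

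For each $\Delta_k$ I fix a large integer $N_k$ and subdivide $\Delta_k$ into $N_k$ equal pieces; inside each piece I place a sub-interval of relative length $\tau$, and let $E_k$ be their union, so that $|E_k|=\tau|\Delta_k|$. In the $\zI$-step case the basis property lets me pick $N_k$ so that the sub-pieces still lie in $\zI$, and I take $E_k$ of the form $G_{l_k}\cap\Delta_k$ with $G_{l_k}$ drawn from property (D) at parameter $\tau$. Then set
\begin{equation*}
g(x)=a_k \text{ on } \Delta_k\setminus E_k,\qquad g(x)=b_k:=-\frac{a_k(1-\tau)}{\tau} \text{ on } E_k,
\end{equation*}
so that each $g|_{\Delta_k}$ has mean zero. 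A short calculation gives $|\{g\neq f\}|=\sum_k|E_k|\le\tau\le\varepsilon$ and $\|g\|_1=2(1-\tau)\|f\|_1\le 2\|f\|_1$. Since $g$ is bounded, the interpolation $\|g\|_p\le\|g\|_\infty^{1-1/p}\|g\|_1^{1/p}$ together with $\|U_n g\|_1\le C_p\|g\|_p$ delivers $\|U_n g\|_1\lesssim\|g\|_1$ uniformly in $n$.

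For the weak-$L^1$ conclusion under (C), (D) with $f$ a $\zI$-step function, write $g=f+\sum_k(b_k-a_k)\ZI_{E_k}$ and use the pointwise sub-additivity $U^*(h_1+h_2)\le U^*(h_1)+U^*(h_2)$ to obtain
\begin{equation*}
U^* g\le\sum_k|a_k|\,U^*(\ZI_{\Delta_k})+\sum_k|b_k-a_k|\,U^*(\ZI_{E_k}).
\end{equation*}
Each $U^*(\ZI_{E_k})$ satisfies the weak-$L^1$ bound directly by \e{a23}, and $U^*(\ZI_{\Delta_k})$ does so through the second preceding Remark applied to the basis $\zI$. Chaining these estimates through the quasi-triangle inequality for $L^{1,\infty}$ and absorbing the logarithmic loss into the lower threshold $\eta$ yields $t\cdot|\{U^* g>t\}|\lesssim\|f\|_1$ for $\eta<t<1$.

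The main obstacle will be this final summation: weak-$L^1$ is not sub-additive, and the quasi-triangle inequality for $L^{1,\infty}$ costs a logarithm in the number of summands. The cut-off $t>\eta$ is exactly what absorbs this loss into the implicit constant without inflating it by the cardinality of the decomposition of $f$. A secondary subtlety is that $\|g\|_\infty\asymp 1/\tau$ blows up as $\tau\to 0$, so one must keep $\tau$ of the order of $\varepsilon$ when carrying out the $L^p$-interpolation step, and verify that the resulting constant still fits the stated form $\|U_n g\|_1\lesssim\|g\|_1$.
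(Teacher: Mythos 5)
Your construction of $g$ (a mean-zero perturbation of each block $a_k\ZI_{\Delta_k}$, supported on a small set drawn from the sets of property (D)) is essentially the same object as in the paper, but both of your key estimates have genuine gaps, and in both cases the missing ingredient is the same: an interlaced choice of indices using compactness (A) together with convergence (B), which guarantees that for each fixed $n$ only \emph{one} block is ever estimated through the operator norm or the maximal operator. First, the bound \eqref{a6}: interpolation gives $\|U_n g\|_1\le C_p\|g\|_\infty^{1-1/p}\|g\|_1^{1/p}$, and since $\|g\|_\infty\asymp\max_k|a_k|/\varepsilon$, the resulting constant depends on $\varepsilon$ and on the profile of $f$ (take $f=a\ZI_\Delta$ with $a$ large and $|\Delta|$ small: the ratio of your bound to $\|f\|_1$ is of order $(a/\|f\|_1)^{1-1/p}\varepsilon^{-(1-1/p)}=(\varepsilon|\Delta|)^{-(1-1/p)}$, which is unbounded). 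But in this paper $\lesssim$ means a constant depending only on the operator sequence, and this uniformity is indispensable: in the proof of Theorem \ref{T1} the lemma is applied with $\varepsilon_k=\varepsilon 2^{-k}$, $\eta_k=4^{-k}$ and the bounds are summed over $k$. The paper gets the uniform constant differently: it first refines the partition so that each single block $\lambda_m^{(k)}$ has $\|\lambda_m^{(k)}\|_p\le\|f\|_1$ (inequality \eqref{a14}), and then chooses $m_1,N_1,m_2,N_2,\dots$ inductively so that for $N_{k-1}<n\le N_k$ the blocks $j<k$ are already reproduced by $U_n$ up to $\xi$ (property (B), \eqref{ll}), the blocks $j>k$ contribute less than $\xi/N_{j-1}$ (compactness plus the weak convergence \eqref{a12}, giving \eqref{l}), and only the single active block $k$ is estimated via $\|U_n\|_{L^p\to L^p}$. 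Your argument uses neither (A) nor the interlacing, and cannot be repaired by keeping $\tau\asymp\varepsilon$.

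The same structural point defeats your weak-type step. You bound $U^*g$ by the \emph{sum} over all $l$ blocks of maximal functions and then invoke quasi-subadditivity of $L^{1,\infty}$; the loss there grows with $l$ (at best logarithmically, by Stein--Weiss), and $l$ depends on $f$ — indeed it is huge after the refinement of the partition — so the constant is inadmissible. Nor can the threshold restriction $t>\eta$ absorb it: the implied constant in \eqref{a11} must also be independent of $\eta$ (Theorem \ref{T1} uses $\eta_k=4^{-k}\to0$ with a uniform constant), and in the paper $\eta$ plays only the qualitative role of fixing the level $\eta/(4l)$ in the exceptional-set estimate \eqref{lll} coming from a.e. convergence (C). The paper avoids any summation loss precisely because of the interlacing: for $N_{k-1}<n\le N_k$ one writes $|U_n(g)|\le A_1+A_2+A_3+A_4$, where the already-converged blocks are absorbed into $|g|$ plus the small error $A_2$, the future blocks give the $L^1$-small term $A_3$, and the maximal operator appears only through $A_4=\sup_j U^*(\lambda_{m_j}^{(j)})$ — a supremum, not a sum — whose superlevel set at height $t/4$ is estimated by summing the measures $|\{U^*(\lambda_{m_j}^{(j)})>t/4\}|\lesssim|a_j||\Delta_j|/t$ at the \emph{same} threshold, giving $\|f\|_1/t$ with no factor of $l$. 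Without this mechanism (and without using (C) at all, which your sketch never invokes quantitatively), neither \eqref{a6} nor \eqref{a11} follows with the required uniform constants.
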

\begin{proof}  
First we shall prove the basic part of the lemma, supposing that all conditions (A), (B), (C) and (D) hold simultaneously.  Hence, let $f(x)$ be a $\zI$-step function of the form \e{step}. By the definition of $\zI$ each interval from $\zI$ can be split into smaller  intervals from $\zI$. Thus we can suppose that all $\Delta_k$ in \e{step} satisfy
\begin{equation}\label{delta}
\Delta_k\in \zI,\quad |\Delta_k|<\delta,\quad k=1,2,\ldots, l,
\end{equation}
where $\delta>0$ can be arbitrarily small. Chose a sequence of finite-interval sets $G_m$, satisfying the conditions of property (D) corresponding to a number $\varepsilon/2$. Using weak convergence property \e {a27}, we have $|G_m\cap \Delta_k|/|G_m|\to|\Delta_k|$ for any $k=1,2,\ldots ,l$. Thus, applying \e {a58}, one can check that the sets  $G_m^{(k)}=G_m\cap \Delta_k$ can satisfy the inequalities
\begin{equation}\label{a45}
\frac{\alpha\varepsilon}{4}|\Delta_k|\le |G_m^{(k)}|\le \varepsilon|\Delta_k| ,\quad m>m_0,
\end{equation}
and we have
\begin{align}
\lambda_m^{(k)}(x)=a_k\left(\ZI_{\Delta_k}(x)- \frac{|\Delta_k|\cdot \ZI_{G_m^{(k)}}(x)}{|G_m^{(k)}|}\right)\overset{w}{\to} 0\text { as } m\to\infty,\label{a12}
\end{align}
for any fixed $k$, where $a_k$ are the coefficient from \e{step}.  Applying \e{delta} and lower bound in \e {a45}, one can easily check that $\|\lambda_m^{(k)}\|_p\lesssim \max |a_k|\cdot (\delta/\varepsilon^{p-1} )^{1/p}$ and so we can fix a smaller enough $\delta$ in \e{delta} to ensure 
\begin{equation}\label{a14}
\|\lambda_m^{(k)}\|_p\le \|f\|_1,\quad k,m=1,2,\ldots.
\end{equation}
Using \e{a23} and the remark before lemma, we conclude 
\begin{align}\label{a46}
|\{U^*(\lambda_m^{(k)})>\lambda\}|
& \le |\{U^*(a_k\ZI_{\Delta_k})>\lambda/2\}|\\
&\qquad +\ll|\ll\{U^*\ll(\frac{a_k|\Delta_k|\cdot \ZI_{G_m^{(k)}}(x)}{|G_m^{(k)}|}\rr)>\lambda/2\rr\}\rr|\\
&\lesssim \frac{ |a_k||\Delta_k|}{\lambda}+\frac{|a_k||\Delta_k||G_m^{(k)}|}{\lambda |G_m^{(k)}|}\lesssim\frac{ |a_k||\Delta_k|}{\lambda}.
\end{align}
For any number $\xi>0$ one can inductively construct integers $1\le N_1<\cdots <N_l$ and $1\le m_1<m_2<\cdots <m_l$ such that 
\begin{align}
&\ll\|U_n\ll(\lambda_{m_j}^{(j)}\rr)-\lambda_{m_j}^{(j)}\rr\|_1< \xi,\quad n\ge N_j,\quad 1\le j\le l,\label{ll}\\
&\ll|\ll\{\sup_{m\ge N_j}|U_m(\lambda_{m_j}^{(j)})-\lambda_{m_j}^{(j)}|>\frac{\eta}{4l}\rr\}\rr|<\xi,\quad 1\le j\le l,\label{lll}\\
&\ll\|U_n\ll(\lambda_{m_j}^{(j)} \rr)\rr\|_1<\frac{\xi}{N_{j-1}} ,\quad n\le N_{j-1},\quad 2\le j\le l,\label{l}
\end{align}
and those are constructed in the order $m_1, N_1, m_2, N_2, \cdots , m_l, N_l$. For \e {ll} the property (B) is used, while \e {lll} follows from a.e. convergence property (C). The inequality \e {l} is based on the compactness of operators $U_n$ combined with \e{a12}.  We define the desired function by
\begin{equation*}
g(x)=\sum_{j=1}^l\lambda_{m_j}^{(j)}(x)
\end{equation*}
choosing $\xi$ to be small enough number. From \e {a45} and \e {a12} we immediately get
\begin{align}\label{a40}
\|g\|_1\le 2\|f\|_1,\quad 
|\{g(x)\neq f(x)\}|=\sum_{j=1}^l|G_{m_j}^{(j)}|\le \varepsilon.
\end{align}
For our further convenience we set $N_{0}=0$, $N_{l+1}=\infty$, $\lambda_{m_{l+1}}^{(l+1)}\equiv 0$ and assume $\sum_a^b=0$ whenever $a>b$. Applying Banach-Steinhaus theorem, from property (B) we get $\|U_n\|_{L^p\to L^p}\le M$ and so by \e {a14}
\begin{equation}\label{a2}
\ll\|U_n\ll(\lambda_{m_j}^{(j)}\rr)\rr\|_{1}\le \ll\|U_n\ll(\lambda_{m_j}^{(j)}\rr)\rr\|_p\le M\|\lambda_{m_j}^{(j)}\|_p\lesssim \
\|f\|_1.
\end{equation} 
Then, using also \e {ll}, \e {l} and \e {a40}, for 
\begin{equation}\label{a41}
N_{k-1}<n \le N_k,\quad k=1,2,\ldots,l+1,  
\end{equation}
and for a small enough $\xi$ ($\xi<\|f\|_1/l$) we conclude
\begin{align}\label{a48}
\|U_n(g)\|_1&\le \ll\|\sum_{j=1}^{k-1}U_n\ll(\lambda_{m_j}^{(j)} \rr)\rr\|_{1}+\ll\|\sum_{j=k}^{l}U_n\ll(\lambda_{m_j}^{(j)} \rr)\rr\|_{1}\\
&\le\ll\|\sum_{j=1}^{k-1}\lambda_{m_j}^{(j)}\rr\|_{1}+\sum_{j=1}^{k-1}\ll\|U_n\ll(\lambda_{m_j}^{(j)}\rr)-\lambda_{m_j}^{(j)}\rr\|_{1}\\
&\qquad +\sum_{j=k+1}^{l}\ll\|U_n\ll(\lambda_{m_j}^{(j)} \rr)\rr\|_{1}+ \ll\|U_n\ll(\lambda_{m_k}^{(k)} \rr)\rr\|_{1}\\
&\lesssim \|g\|_1+l\xi +\frac{l\xi}{N_k}+\|f\|_1\\
&\le 4\|f\|_1,
\end{align}
that implies \e {a6}. To prove \e {a11} we let $n$ to be an arbitrary positive integer and let it satisfy \e {a41}. We have 
\begin{align*}
|U_n(g)|&\le \sum_{j=1}^l|U_n\ll(\lambda_{m_j}^{(j)} \rr)|\\
&\le\sum_{j=1}^{k-1}\ll|\lambda_{m_j}^{(j)}\rr|+\sum_{j=1}^{k-1}\ll|U_n\ll(\lambda_{m_j}^{(j)}\rr)-\lambda_{m_j}^{(j)}\rr|+\sum_{j=k+1}^{l}\ll|U_n\ll(\lambda_{m_j}^{(j)}\rr)\rr|+|U_n\ll(\lambda_{m_k}^{(k)} \rr)|\\
&\le |g(x)|+\sum_{j=1}^{l}\sup_{m\ge N_j}|U_m(\lambda_{m_j}^{(j)})-\lambda_{m_j}^{(j)}|\\
&\qquad +\sum_{s=1}^{l-1}\sum_{j=s+1}^{l}\sum_{m=N_{s-1}+1}^{N_s}\ll|U_m\ll(\lambda_{m_j}^{(j)}\rr)\rr|+\sup_{1\le j\le l}U^*\ll(\lambda_{m_j}^{(j)}\rr)\\
&=A_1+A_2+A_3+A_4.
\end{align*}
Observe that each of functions $A_i$, $i=1,2,3,4$, is independent of $n$ and so we can write
\begin{equation}\label{a18}
U^*g(x)\le A_1(x)+A_2(x)+A_3(x)+A_4(x).
\end{equation}
For $A_1=|g|$ we write Chebyshev's inequality
\begin{equation}\label{a19}
|\{x\in (0,1):\, A_1(x)>t/4\}|\lesssim \frac{\|g\|_1}{t}\le  \frac{2\|f\|_1}{t},\quad t>0.
\end{equation}
Then, applying \e {lll}, for $t>\eta$ and a small enough $\xi$ ($\xi<\|f\|_1/l$) we get
\begin{align}\label{a20}
|\{x\in (0,1):\,A_2(x)>t /4 \}|&\le |\{x\in (0,1):\,A_2(x)>\eta /4 \}|\\
&\le l\cdot\xi\le  \|f\|_1,\quad t>\eta.
\end{align}
From \e {l} with $\xi<\|f\|_1/l^2$ it follows that 
\begin{equation*}
\left\|A_3\right\|_1\le l^2\cdot (N_s-N_{s-1})\cdot\frac{\xi}{N_s}\le \|f\|_1,
\end{equation*}
then, again writing Chebyshev's inequality, we will get 
\begin{equation}\label{a21}
|\{x\in (0,1):\, A_3(x)>t/4\}|\lesssim\frac{\|f\|_1}{t},\quad t>0.
\end{equation}
Applying \e {a46} we obtain 
\begin{align}\label{a15}
|\{A_4(x)>t/4\}|\}|&\le \sum_{j=1}^l|\{U^*\ll(\lambda_{m_j}^{(j)} \rr)>t/4 \}|\\
&\lesssim \sum_{j=1}^l \frac{|a_j||\Delta_j|}{t}= \frac{\|f\|_1}{t}.
\end{align}
Combining \e {a18}, \e {a19}, \e {a20}, \e {a21} and  \e {a15} , we obtain \e {a11} that completes the proof of basic part of lemma.

Now suppose that only properties (A) and (B) hold and $f$ is an arbitrary step function.  So the lemma claims to find a function $g$ satisfying \e {a47} and \e {a6}. To do it we need to review once again the proof of the basic part of lemma with slight changes described below.  As before we shall consider the step function \e{step}, where $\Delta_k$, $k=1,2,\ldots,l$ are arbitrary interval satisfying \e{delta} for small enough $\delta$, but the sets $G_m^{(k)}$ should be defined differently, that is 
\begin{equation*}
G_m^{(k)}=G_{m}(\varepsilon)\cap \Delta_k,
\end{equation*}
where $G_m(\varepsilon)$ is the set defined in \e {a29}. This gives a slight change in the definition of the function $g$, but $g$ will still satisfy \e {a47} and \e {a6}. To proceed the proof one needs to omit inequalities \e {a46} and \e {lll} obtained from conditions (C) and (D) and neglect the part of the proof concerning the bound \e {a11}. 
\end{proof}

\begin{proof}[Proof of \trm {T1}]  Let $f_k(x)$, $k=1,2,\cdots $ be a sequence of step functions that is everywhere dense in $L^1(0,1)$. In the case of extra conditions of theorem we take $f_k$ to be $\zI$-step function, where  $\zI$ is the basis from the statement of condition (D). Existence of a such sequence  follows from the properties of basis $\zI$. Applying lemma for $\varepsilon_k=\varepsilon\cdot 2^{-k}$ and $\eta_k=4^{-k}$, we find a sequence of step functions $g_k(x)$ and sets $E_k\subset (0,1)$ such that
\begin{align}
&|\{f_k(x)\neq g_k(x)\}|<\varepsilon \cdot 2^{-k},\label{lm1}\\
&\|U_ng_k\|_{1}\lesssim\|g_k\|_{1}\lesssim  \|f_k\|_{1},\quad n=1,2,\cdots,\label{lm2} \\ 
&|\{U^*g_k(x)>t\}|\lesssim\frac{ \|f_k\|_1}{t},\quad t>4^{-k},\label{a24}
\end{align}
where the last inequality holds in the case of extra conditions and it will be only used in the proof of a.e. convergence.  Using \e {lm1}, we can fix an open set $G$ such that $|G|<\varepsilon$ and 
\begin{equation}\label{a69}
\bigcup_{k\ge 1}\{f_k(x)\neq g_k(x)\}\subset G.
\end{equation}
We claim that $G$ is the desired set that claims the theorem.  Let $f\in L^1(0,1)$ be an arbitrary function. We may suppose that $\|f\|_1=1$. One can see that there exists a subsequence $f_{n_k}$ such that
\begin{align}
&\ll\|\sum_{k=1}^n f_{n_k}-f\rr\|_{1}\to 0,\label{hk1}\\
&\|f_{n_k}\|_1\lesssim 4^{-k},\quad k=1,2,\cdots
\label{hk2}
\end{align}
Denote 
\begin{equation}\label{a42}
g=\sum_{k=1}^\infty g_{n_k}.
\end{equation}
From \e{lm2}, \e{hk2} it follows that 
\begin{equation}\label{a8}
\|g_{n_k}\|_1\lesssim \|f_{n_k}\|_{1}\lesssim 4^{-k} 
\end{equation}
and so the series \e {a42} converges in $L^1$. By \e{a69} we have $\{f(x)\neq g(x)\}\subset G$.  Since $U_n$ is bounded operator on $L^1$,  we have $U_n(g)=\sum_{k=1}^\infty U_n(g_{n_k})$ in the sense  of $L^1$-convergence. Given $\delta>0$ we can fix an integer $l$ such that $4^{-l} <\delta$. Then, by \e {lm2} we will have
\begin{equation}\label{a22}
\sum_{k=l+1}^\infty\|U_n(g_{n_k})\|_1\lesssim \sum_{k=l+1}^\infty\|g_{n_k}\|_1\lesssim \delta .
\end{equation}
On the other hand, applying property (B), for a bigger enough integer $n_0$ we obtain
\begin{equation}\label{a53}
\sum_{k=1}^l\left\|U_n(g_{n_k})-g_{n_k}\right\|_1\le \sum_{k=1}^l\left\|U_n(g_{n_k})-g_{n_k}\right\|_p\le \delta,\quad n\ge n_0.
\end{equation}
Hence, from \e {a8} and \e {a22} for $n\ge n_0$ we get
\begin{equation}\label{a54}
\|U_n(g)-g\|_1\le \sum_{k=1}^l\left\|U_n(g_{n_k})-g_{n_k}\right\|_1+\sum_{k=l+1}^\infty\ll(\left\|U_n(g_{n_k})\right\|_1+\left\|g_{n_k}\right\|_1\rr)\lesssim \delta
\end{equation}
that implies $L^1$ convergence of $U_n(g)$ to $g$. To prove a.e. convergence first note that from property (C) it follows that
\begin{equation}\label{a25}
\lim_{n\to\infty}U_n\left(\sum_{k=1}^mg_{n_k}\right)=\sum_{k=1}^mg_{n_k}\text { a.e.}
\end{equation}
for any fixed $m$. Given numbers $\lambda>0$ denote $t_k=\lambda\cdot  2^{-k-1}$ and chose $m$ satisfying the conditions
\begin{align*}
&2^{-m}<\lambda^2 \text{ and } t_k\ge 4^{-n_k}=\eta_{n_k},\quad k>m.
\end{align*}
In the case of extra conditions (C) and (D), applying \e {a24},\e {hk2}, \e {a8} and \e {a25}, we obtain
\begin{align*}
&\left|\left\{ \limsup_{n\to\infty} |U_n(g)-g|>\lambda \right\}\right|\\
&\qquad=\left|\left\{ \limsup_{n\to\infty} \left|U_n\left(\sum_{k=m+1}^\infty g_{n_k}\right)-\sum_{k=m+1}^\infty g_{n_k}\right|>\lambda \right\}\right|\\
&\qquad\le \left|\left\{ U^*\left(\sum_{k=m+1}^\infty g_{n_k}\right)+\sum_{k=m+1}^\infty |g_{n_k}|>\lambda \right\}\right|\\
&\qquad\le \left|\left\{ \sum_{k=m+1}^\infty U^*\left(g_{n_k}\right)>\lambda/2 \right\}\right|+\left|\left\{ \sum_{k=m+1}^\infty |g_{n_k}|>\lambda/2 \right\}\right|\\
&\qquad\le \sum_{k=m+1}^\infty \left|\left\{U^*\left(g_{n_k}\right)>t_k \right\}\right|+\frac{2}{\lambda}\sum_{k=m+1}^\infty \|g_{n_k}\|_1\\
&\qquad\lesssim \frac{1}{\lambda}\sum_{k=m+1}^\infty2^k\cdot  \|f_{n_k}\|_1+\lambda\\
&\qquad\lesssim \lambda.
\end{align*}
Since $\lambda>0$ can be chosen arbitrarily small, this immediately implies a.e. convergence of $U_n(g)$ and completes the proof of theorem.
\end{proof}
The proof of \trm {T2} is based on the following lemma analogous to \lem {L1}.  Instead of hypothesis  (D) \lem {L2} uses ($\text {D}^*$) and claims the same properties as \lem {L1} does except the inequality $\|U_n(g)\|_1\lesssim \|g\|_1$.
\begin{lemma}\label{L2}
	Let $U\subset [0,1)$ be an everywhere dense open  set and a sequence of bounded linear operators \e {oper} satisfy conditions (A), (B), (C)  and ($\text {D}^*$). Then for any number $0<\eta<1$ and a $\zI$-step function $f(x)$ there is a step function $g(x)$ such that 
	\begin{align}
	&\{g(x)\neq f(x)\}\subset U,\label{a50}\\
	&\|g\|_1\le 2\|f\|_1,\quad n=1,2,\ldots ,\label{a51}\\
	&t\cdot |\{U^*(g)>t\}|\lesssim \|f\|_1,\quad \eta<t<1.\label{a52}
	\end{align}
\end{lemma}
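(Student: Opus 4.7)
The plan is to parallel the proof of the basic part of \lem{L1}, making only the changes required by the replacement of hypothesis (D) by (D*) and by the fact that we no longer need to establish the bound $\|U_n(g)\|_1\lesssim\|g\|_1$. Accordingly, I would write the given $\zI$-step function as $f(x)=\sum_{k=1}^l a_k\ZI_{\Delta_k}(x)$ with each $\Delta_k\in\zI$ and, using the basis property of $\zI$, refine the partition so that $|\Delta_k|<\delta$ for a sufficiently small $\delta$. Next I would apply property (D*) to the given everywhere dense open set $U$ with some fixed auxiliary $\varepsilon\in(0,1)$ (say $\varepsilon=1/2$), obtaining a sequence of finite-interval sets $G_m\subset U$ with $|G_m|\le\varepsilon$ for which \e{a27} and \e{a23} hold. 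Setting $G_m^{(k)}=G_m\cap\Delta_k$, I would define
\begin{equation*}
\lambda_m^{(k)}(x)=a_k\left(\ZI_{\Delta_k}(x)-\frac{|\Delta_k|\cdot\ZI_{G_m^{(k)}}(x)}{|G_m^{(k)}|}\right),
\end{equation*}
and observe that $\lambda_m^{(k)}\overset{w}{\to}0$ as $m\to\infty$ for each fixed $k$, while the argument producing \e{a46} (via the remark preceding \lem{L1} together with \e{a23}) yields the weak type estimate $|\{U^*(\lambda_m^{(k)})>\lambda\}|\lesssim|a_k||\Delta_k|/\lambda$.

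The second step is to run the same inductive selection of indices $1\le N_1<\cdots<N_l$ and $m_1<\cdots<m_l$ as in the basic part of \lem{L1}, securing the analogues of \e{ll}, \e{lll} and \e{l} by means of (B), (C) and the compactness of $U_n$ combined with $\lambda_m^{(k)}\overset{w}{\to}0$, respectively, with the auxiliary parameter $\xi$ taken smaller than $\|f\|_1/l^2$. Setting $g=\sum_{j=1}^l\lambda_{m_j}^{(j)}$, the inclusion $G_m\subset U$ gives $\{g\neq f\}\subset\bigcup_j G_{m_j}^{(j)}\subset U$, which is \e{a50}, and the triangle inequality reproduces \e{a51} exactly as in \e{a40}.

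For the weak type bound \e{a52} I would reuse the decomposition $U^*g\le A_1+A_2+A_3+A_4$ appearing in the proof of \lem{L1} and apply the same four bounds: Chebyshev for $A_1=|g|$; the inequality \e{lll} for $A_2$; the inequality \e{l} combined with Chebyshev for $A_3$ (this is the place where $\xi<\|f\|_1/l^2$ is needed); and the weak type estimate on $U^*(\lambda_{m_j}^{(j)})$ for $A_4$. Summing the four contributions gives \e{a52} for every $\eta<t<1$. The only genuinely new point in the bookkeeping, which I expect to be the main (and rather mild) obstacle, is that (D*) supplies no lower bound on $|G_m|$, so one cannot derive an $L^p$ bound on $\lambda_m^{(k)}$ analogous to \e{a14}; however, that bound was invoked in \lem{L1} only to control $\|U_n(g)\|_1$, an estimate that \lem{L2} does not claim, so this loss does no harm.
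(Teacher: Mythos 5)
Your proposal is correct and follows essentially the same route as the paper: the paper's proof of Lemma \ref{L2} is precisely a repetition of the basic part of Lemma \ref{L1} with the sets $G_m$ (hence $G_m^{(k)}$) chosen inside $U$ via (D*), noting that the only casualty of losing the lower bound in \eqref{a45} is the chain \eqref{a14}, \eqref{a2}, \eqref{a48} used solely for $\|U_n(g)\|_1\lesssim\|g\|_1$, which Lemma \ref{L2} does not claim. Your identification of that as the sole point of divergence, and your verification that \eqref{a50}--\eqref{a52} survive unchanged, matches the paper's argument.
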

\begin{proof}
	The proof of this lemma is a literal repetition of the proof of \lem {L1} with slight changes. Namely, the sets $G_m$ and so $G_m^{(k)}$ should be chosen from a given open set $U$, according to property $(\text{D}^*)$ instead of (D). So in \e {a45} we will have only upper bound for the measures of  $G_m^{(k)}$, but instead we will have \e {a50}. Hence  the inequalities \e {a14}, \e {a2} and finally \e {a48} will fail, that is why the bound $\|U_n(g)\|_1\lesssim \|g\|_1$ is missing in \lem {L2}. No one from the mentioned inequalities used in the proofs of the other relations of \lem {L1} that are the same relations as in \lem {L2}. With this we can finish the proof of \lem {L2}. 
\end{proof}
\begin{proof}[Proof of \trm {T2}]
	The proof of \trm {T2} is based on \lem {L2} and reflected in the proof of \trm {T1}. Indeed, instead of \e {lm1} we will have the condition $\{g_k(x)\neq f_k(x)\}\subset U$, which will imply $\{g(x)\neq f(x)\}\subset U$. Besides, the lack of condition $\|U_n\|_1\lesssim \|g\|_1$ in \lem {L2} will only affect on $L^1$-convergence property of operators. Namely, we will not have the inequalities \e {a22}, \e {a53} and \e {a54}, but they are not needed in the proof of a.e. convergence. So the \trm {T2} follows.
\end{proof}
\section{Weak type estimates and Fourier series}\label{S3}

\subsection{Trigonometric system}
We shall consider the trigonometric system $\{e^{2\pi inx}\}$ on $[0,1)$.  Denote by $S_n(x,f)$ the partial sums of Fourier series of a function $f$ and let \begin{equation*}
S^*(x,f)=\sup_{n\ge 0}|S_n(x,f)|.
\end{equation*}
For an interval  $\Delta=[a,b)\subset [0,1)$ and an integer $l$ define the partition 
	\begin{equation*}
	\Delta_k=\left[a+(k-1)d,a+kd\right),\quad d=\frac{b-a}{l}\quad k=1,2,\ldots,l.
	\end{equation*}
	Set $\delta_k=[t_k-d\varepsilon/2,t_k+d\varepsilon/2)$, where $t_k=a+(2k-1)d/2$ is the center of $\Delta_k$ and $0<\varepsilon <1$ and denote
	\begin{equation}\label{a65}
	G_l=G_l(\Delta,\varepsilon)=\bigcup_{k=1}^{l}\delta_k.
	\end{equation}
The following proposition shows a weak-$L^1$ inequality for the indicator functions of such sets deriving (D) and ($\textrm{D}^*$) conditions for the partial sum operators of trigonometric Fourier series. 
\begin{prop}\label{P1}
There is an absolute constant $c>0$ such that for any set $G=G_l(\Delta,\varepsilon)$ of the form \e {a65} it holds the inequality
\begin{equation}\label{a37}
|\{S^*(x,\ZI_{G})>\lambda\}|\le c\cdot \frac{|G|}{\lambda},\quad \lambda>0.
\end{equation}
\end{prop}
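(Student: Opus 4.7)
The plan is to deduce \e{a37} via a scaling reduction to the indicator function of a single interval, handling first the canonical case $\Delta=[0,1)$ and then the general case.

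For $\Delta=[0,1)$, the definition \e{a65} gives the identity $\ZI_G(x)=H(lx)$, where $H$ is the $1$-periodic function whose restriction to $[0,1)$ is the indicator of the interval $[(1-\varepsilon)/2,(1+\varepsilon)/2)$ of length $\varepsilon$. Equivalently, orthogonality of the arithmetic progression $\{t_k\}$ forces the Fourier coefficient $\widehat{\ZI_G}(n)$ to vanish for $l\nmid n$ and $\widehat{\ZI_G}(ml)=\widehat H(m)$. Hence
\begin{equation*}
S_n(x,\ZI_G)=S_{\lfloor n/l\rfloor}(lx,H),\qquad S^*(x,\ZI_G)=S^*(lx,H),
\end{equation*}
and since $y=lx\bmod 1$ is an $l$-to-$1$ measure-preserving map on $[0,1)$ and $S^*(\cdot,H)$ is $1$-periodic,
\begin{equation*}
|\{x\in[0,1):S^*(x,\ZI_G)>\lambda\}|=|\{y\in[0,1):S^*(y,H)>\lambda\}|.
\end{equation*}
In this case \e{a37} reduces to the weak-$L^1$ estimate for the maximal partial Fourier sum of the indicator of a single interval.

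The bound $|\{S^*(\ZI_I)>\lambda\}|\le c|I|/\lambda$ I would establish from the representation $S_N(x,\ZI_I)=K_N(x-a)-K_N(x-b)$, with $K_N(t)=\int_0^t D_N(s)\,ds$, combined with the uniform estimate $|K_N(t)|\le C$ (a consequence of $K_N(t)=\pi^{-1}\mathrm{Si}((2N+1)\pi t)+O(1)$ and the uniform boundedness of the sine integral). Refining this to the pointwise control $S^*(\ZI_I)(x)\lesssim 1+\log_+(|I|/\dist(x,\partial I))$ near $\partial I$ and $S^*(\ZI_I)(x)\lesssim|I|/\dist(x,I)$ for $x$ away from $I$, and then integrating over level sets, yields the desired inequality and completes the proof for $\Delta=[0,1)$.

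For a general interval $\Delta=[a,b)\subset[0,1)$, the scaling $x\mapsto(x-a)/|\Delta|$ does not preserve the integer Fourier spectrum on $[0,1)$, so the reduction above does not apply directly. A natural attempt is the decomposition
\begin{equation*}
\ZI_G(x)=\ZI_\Delta(x)\,\Phi(x-a)=\varepsilon\,\ZI_\Delta(x)+\sum_{m\neq 0}c_m\,\ZI_\Delta(x)\,e^{2\pi im(x-a)/d},
\end{equation*}
where $\Phi$ is the $d$-periodic extension of $\ZI_{[(1-\varepsilon)d/2,(1+\varepsilon)d/2)}$ and $|c_m|\le\min(\varepsilon,1/(\pi|m|))$. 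The constant term $\varepsilon\ZI_\Delta$ is controlled directly by the weak-$L^1$ estimate for $S^*(\ZI_\Delta)$ and contributes $\lesssim\varepsilon|\Delta|/\lambda=|G|/\lambda$. The modulated remainders, however, involve the non-integer frequencies $m/d=ml/|\Delta|$, and here lies the main obstacle: no clean shift identity for trigonometric partial sums is available, so one must combine $L^p$ Carleson--Hunt estimates with the $O(1/|m|)$ decay of the $c_m$ and a frequency-localized summation in order to recover the scale-invariant weak bound. This is precisely why, as the author emphasizes, the trigonometric case is substantially more delicate than its Walsh analogue \pro{P2}, and it is where I expect the bulk of the technical work to lie.
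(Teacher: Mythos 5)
Your reduction in the canonical case $\Delta=[0,1)$ is correct and is a genuinely neat shortcut: there $d=1/l$ divides $1$, so $\ZI_G(x)=H(lx)$ with $H$ the $1$-periodic indicator of an interval of length $\varepsilon$, the spectrum of $\ZI_G$ sits in $l\ZZ$, the identity $S_n(x,\ZI_G)=S_{\lfloor n/l\rfloor}(lx,H)$ holds, and the measure-preserving map $x\mapsto lx \bmod 1$ transfers \e{a37} to the single-interval weak estimate, which your sine-integral argument handles in essentially the same elementary way as the paper does.

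The gap is the general interval $\Delta=[a,b)$, which is the actual content of \pro{P1} (it is what feeds \e{a23} for arbitrary $\Delta$ in the basis and the condition ($\textrm{D}^*$)), and there your text stops exactly where the work begins. The fallback you sketch --- Carleson--Hunt in $L^p$ plus the decay $|c_m|\le\min(\varepsilon,1/(\pi|m|))$ and an unspecified ``frequency-localized summation'' --- is not a proof and, as stated, cannot produce the scale-invariant bound: $\sum_{m\neq 0}|c_m|$ diverges, so summing maximal-operator bounds over $m$ by the triangle inequality fails, and Chebyshev applied to $L^2$ or $L^p$ Carleson bounds yields quantities of size $|\Delta|/\lambda^p$ rather than $|G|/\lambda=\varepsilon|\Delta|/\lambda$; it would also invoke a far heavier theorem than anything the statement needs. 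The missing idea, which is the heart of the paper's argument, is to exploit the exact arithmetic progression of the centers $t_j=a-d/2+jd$. One splits the modified partial sum at the interval $\delta_{k(x)}$ containing $x$ (that piece is the single-interval case), and for the remaining sum $v_n$ writes the frequency as $n=m/d+n^*$ with $m\in\ZN$ and $|n^*|\le 1/(2d)$, so that $e^{-2\pi i nt_j}$ equals a unimodular constant times $e^{-2\pi i n^*t_j}$ because $e^{-2\pi i mj}=1$; after replacing $x-t-t_j$ by $t_{k(x)}-t_j$ at cost $O(1/d)$, the tail $|j-k(x)|>\nu(n)=[1/(d|n^**|)]$ is controlled by the monotone-coefficient exponential-sum (Abel summation) bound and the near part by $|\sin u|\le |u|$, giving $|v_n(x)|\lesssim \varepsilon\bigl(1+\ln(l/k(x))\bigr)$ uniformly in $n$, whose level sets have measure $\lesssim |G|/\lambda$. (Here $n^{**}$ should read $n^{*}$.) Without this cancellation mechanism, or a worked-out substitute for it, the general case remains unproved, so the proposal does not establish the proposition.
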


\begin{proof}[Proof of \pro{P1}]
We shall use the well-known formula	
\begin{equation}\label{a70}
S_n(x,f)= \int_0^1\frac{\sin2\pi n(x-t)}{x-t}f(t)dt+O(\|f\|_1),
\end{equation}
where the integral (that is the modified partial sum) will be denoted by $\tilde S_n(x,f)$.  We also set
\begin{equation*}
 \tilde S^*(f,x)=\sup_{n\ge 1}|\tilde S_n(x,f)|.
\end{equation*}
Using \e {a70}, one can observe that it is enough to prove \e {a37} for $\tilde S^*$ instead of $S^*$.  First let us show \e {a37} when $G$ consists of a single interval $\delta=[\alpha,\beta)$. If $x\in \ZT\setminus\bar\delta$, then we have
	\begin{align}\label{a38}
|\tilde S_n(x,\ZI_\delta)|&\lesssim \int_{ \alpha}^\beta \frac{dt}{|x-t|}\\
&=\ln \left(1+\frac{|\delta|}{\dist(x,\delta)}\right)<\frac{|\delta|}{\dist(x,\delta)},\quad x\in \ZT\setminus\bar \delta.
	\end{align}
From the uniformly boundedness of the integrals $\int_0^b\frac{\sin at}{t}dt$ $a,b>0$, we can conclude
\begin{equation}\label{a39}
|\tilde S_n(x,\ZI_\delta)|\le c,\quad x\in \delta,
\end{equation}
where $c>0$ is an absolute constant.
If $\lambda>c$ then from \e {a38} and \e {a39} we obtain
\begin{align*}
|\{x\in\ZT:\,\tilde S^*(x,\ZI_\delta)>\lambda\}|&=|\{x\in\ZT\setminus \delta:\,\tilde S^*(x,\ZI_\delta)>\lambda\}|\\
&\le |\{x\in\ZT\setminus \delta:\,\frac{|\delta|}{\dist(x,\delta)}>\lambda\}|=\frac{2|\delta|}{\lambda}.
\end{align*}
If $0<\lambda\le c$, then we have
\begin{equation*}
|\{x\in\ZT:\,\tilde S^*(x,\ZI_\delta)>\lambda\}|\le |\delta|+|\{x\in\ZT\setminus \delta:\,\tilde S^*(x,\ZI_\delta)>\lambda\}|\le  \frac{(2+c)|\delta|}{\lambda}
\end{equation*}
which completes the proof of \e{a37} in the single interval case. Now take an arbitrary set $G$ of the form \e {a65}.  Without loss of generality we can suppose $\varepsilon<1/3$. 
 Using the structure of $G$ one can easily check that for $x\in \ZT\setminus\bar \Delta$ we have
\begin{align*}
|\tilde S_n(x,\ZI_G)|&\lesssim \int_{G}\frac{dt}{|x-t|}\\
&\lesssim\varepsilon \int_{\dist(x,\Delta)}^{\dist(x,\Delta)+|\Delta|}\frac{dt}{|x-t|}
\le \frac{\varepsilon |\Delta|}{\dist (x,\Delta)}=\frac{|G|}{\dist (x,\Delta)}.
\end{align*}
Thus we get
\begin{align}\label{a36}
&|\{x\in \ZT\setminus \Delta:\,\tilde S^*(x,\ZI_G)>\lambda \}|\\
&\qquad \lesssim \left|\left\{x\in \ZT\setminus \Delta:\, \dist(x,\Delta)<\frac{|G|}{\lambda}\right\}\right|\lesssim \frac{|G|}{\lambda}.
\end{align}
If $x\in \Delta$, then we have $x\in \Delta_{k(x)}$ for some $k(x)$. We denote $L=\{1,2,\ldots, l\}$.  Splitting the partial sum integral, we write
\begin{align}\label{a75}\\
\tilde S_n(x,\ZI_G)&=\int_{\delta_{k(x)}}\frac{\sin2\pi n(x-t)}{x-t}dt+\sum_{j\in L:\,j\neq k(x)}\int_{\delta_j}\frac{\sin2\pi n(x-t)}{x-t}dt\\
&=u_n(x)+v_n(x).
\end{align}
Applying the inequality in the single interval case,  we get
\begin{align}\label{a76}\\
|\{x\in \Delta:\, \sup_n|u_n(x)|>\lambda \}|&=\sum_{k=1}^l 	|\{x\in\Delta_k:\,\sup_n|u_n(x)|>\lambda \}|\\
&=\sum_{k=1}^l 	|\{x\in\Delta_k:\,\tilde S^*(x,\ZI_{\delta_{k}})|>\lambda \}|\\
&\lesssim\frac{l\varepsilon d }{\lambda}=\frac{|G|}{\lambda}.
\end{align}
For the function $v_n$ we have 
 \begin{align}\label{a71}
 |v_n(x)|&\le  \left|\sum_{j\in L:\,j\neq k(x)}\int_{\delta_j}\frac{e^{2\pi i n (x-t)}}{x-t}dt\right|=\left|\sum_{j\in L:\,j\neq k(x)}\int_{\delta_j}\frac{e^{-2\pi i n t}}{x-t}dt\right|\\
 &=\left|\int_{-\varepsilon d/2}^{\varepsilon d/2}\sum_{j\in L:\,j\neq k(x)}\frac{e^{-2\pi i n (t+t_j)}}{x-t-t_j}dt\right|.
 \end{align}
If $t\in [-\varepsilon d/2,\varepsilon d/2]$, then
\begin{equation}\label{a72}
\sum_{j\in L:\,j\neq k(x)}\left|\frac{1}{x-t-t_j}-\frac{1}{t_{k(x)}-t_j}\right|\lesssim \sum_{j\in L:\,j\neq k(x)}\frac{d}{|t_{k(x)}-t_j|^2}\lesssim \frac{1}{d}.
\end{equation}
Besides, one can uniquely write a decomposition $n=\frac{m}{d}+n^*$, where $m$ is a positive integer and $n^*\in[-1/2d,1/2d)$. By definition we have $t_j=a-d/2+dj$ and so we get
\begin{align*}
e^{-2\pi i nt_j}&=e^{-2\pi i \frac{m}{d} t_j}\cdot e^{-2\pi i n^* t_j}
=e^{-2\pi i \frac{m}{d}(a-d/2)}\cdot e^{-2\pi i mj}\cdot e^{-2\pi i n^* t_j}\\
&=e^{-2\pi i \frac{m}{d}(a-d/2)}\cdot e^{-2\pi i n^* t_j}.
\end{align*}
From this and \e {a72} we obtain
\begin{align}\label{a32}
\left|\sum_{j\in L:\,j\neq k(x)}\frac{e^{-2\pi i n (t+t_j)}}{x-t-t_j}\right|&=\left|\sum_{j\in L:\,j\neq k(x)}\frac{e^{-2\pi i n t_j}}{x-t-t_j}\right|=\left|\sum_{j\in L:\,j\neq k(x)}\frac{e^{-2\pi i n^* t_j}}{x-t-t_j}\right|\\
&\lesssim\left|\sum_{j\in L:\,j\neq k(x)}\frac{e^{-2\pi i n^* t_j}}{t_{k(x)}-t_j}\right|+ \frac{1}{d}\\
&\le \left|\sum_{j\in L:\,0<|j-k(x)|\le \nu(n)}\frac{e^{-2\pi i n^* t_j}}{t_{k(x)}-t_j}\right|\\
&\qquad +\left|\sum_{j\in L:\,|j-k(x)|>\nu(n)\ }\frac{e^{-2\pi i n^* t_j}}{t_{k(x)}-t_j}\right|+\frac{1}{d},
\end{align}
where $\nu(n)=[1/d|n^*|]$.  For the estimation of the second sum recall the well-known inequality 
\begin{equation*}
\left|\sum_{k=0}^ma_ke^{ikx}\right|\le \frac{a_1}{|\sin (x/2)|}
\end{equation*}
that holds whenever $a_1\ge a_2\ge \ldots \ge a_m\ge 0$ and $0<|x|\le \pi$. 
So, using $|n^*|\le 1/2d$ under an additional condition $|n^*|>0$,  we get
\begin{align}\label{a33}
\left|\sum_{j\in L:\,|j-k(x)|> \nu(n)\ }\frac{e^{-2\pi i n^* t_j}}{t_{k(x)}-t_j}\right|&\le\frac{2}{(\nu(n)+1)d}\cdot \frac{1}{|\sin (\pi dn^*)|}\\
&\lesssim \frac{d|n^*|}{d}\cdot \frac{1}{d|n^*|}=\frac{ 1}{d}.
\end{align}
The condition $|n^*|>0$ is not a restriction, since for a smaller enough $|n^*|$ in the left side of \e{a33}  we will have an empty sum. 
To estimate the first sum in \e {a32}, without loss of generality we can suppose that $k(x)\le l/2$ and denote 
\begin{equation*}
r=r(x,n)=\min\{k(x)-1,\nu(n) \}.
\end{equation*}
Observe that $\{j\in\ZZ:\,0<|j-k(x)|\le r(x,n)\}\subset L$ and we get
\begin{align*}
&\left|\sum_{j\in L:\,0<|j-k(x)|\le r(x,n)}\frac{e^{2\pi i n^*(t_{k(x)}- t_j)}}{t_{k(x)}- t_j}\right|\\
&\qquad=\ll|\sum_{j=1}^{r(x,n)}\left(\frac{e^{2\pi i n^*(t_{k(x)}- t_{k(x)-j})}}{t_{k(x)}- t_{k(x)-j}}+\frac{e^{2\pi i n^*(t_{k(x)}- t_{k(x)+j})}}{t_{k(x)}- t_{k(x)+j}}\right)\rr|\\
&\qquad=\ll|\sum_{j=1}^{r(x,n)}\left(\frac{e^{2\pi i n^*(t_{k(x)}- t_{k(x)-j})}}{t_{k(x)}- t_{k(x)-j}}-\frac{e^{-2\pi i n^*(t_{k(x)}- t_{k(x)-j})}}{t_{k(x)}- t_{k(x)-j}}\right)\rr|\\
&\qquad=2\ll|\sum_{j=1}^{r(x,n)}\frac{\sin2\pi  n^*(t_{k(x)}- t_{k(x)-j})}{t_{k(x)}- t_{k(x)-j}}\rr|\\
&\qquad\lesssim r(x,n)\cdot n^*\le\nu(n)\cdot n^*\le \frac{1}{d}.
\end{align*}
Thus we obtain
\begin{align}\label{a34}
 \\ \left|\sum_{j\in L:\,0<|j-k(x)|\le \nu(n)}\frac{e^{-2\pi i n^* t_j}}{t_{k(x)}-t_j}\right|&= \left|\sum_{j\in L:\,0<|j-k(x)|\le  \nu(n)}\frac{e^{2\pi i n^*(t_{k(x)}- t_j)}}{t_{k(x)}- t_j}\right|\\
 &\le \left|\sum_{j\in L:\,0<|j-k(x)|\le r(x,n)}\frac{e^{2\pi i n^*(t_{k(x)}- t_j)}}{t_{k(x)}- t_j}\right|\\
 &\qquad+\sum_{j\in L:\, r(x,n)<|j-k(x)|\le \nu(n)}\frac{1}{|t_{k(x)}- t_j|}\\
 &\lesssim \frac{ 1}{d}+\frac{ 1}{d}\cdot  \ln \frac{\min\{\nu(n),l\}}{r(x,n)+1}.
\end{align} 
Note that in the case $\nu(n)<k(x)$ we have $r(x,n)=\nu(n)$, so in the last estimate we will have just $1/d$. Combining this with \e {a71}, \e {a33} and \e {a34}, we obtain $|v_n(x)|\lesssim \varepsilon$. In the case $k(x)\le \nu(n)$ we have $r(x,n)=k(x)-1$ and so from \e {a71} and \e {a34} we get
\begin{equation}\label{a35}
|v_n(x)|\lesssim \varepsilon+\varepsilon \cdot \ln\frac{l}{r(x,n)+1}\le \varepsilon+\varepsilon \cdot \ln\frac{l}{k(x)}=\gamma(x),
\end{equation}
and hence the inequality \e {a35} holds for every $x\in\Delta$. Thus, for $\lambda>2\varepsilon$ with an appropriate constant $c>0$ we have
\begin{align}\label{a73}\\
|\{x\in\Delta:\, \sup_n|v_n(x)|>c\cdot \lambda\}&|\lesssim |\{x\in\Delta:\, \gamma(x)>\lambda\}|\\
&\lesssim\left|\left\{x\in\Delta:\,\varepsilon \ln\frac{l}{k(x)}>\lambda/2\right\}\right|\\
&=\left|\left\{x\in\Delta:\,k(x)<\frac{l}{e^{\lambda/2\varepsilon}}\right\}\right|\\
&\le\left|\left\{x\in\Delta:\,k(x)<\frac{2l\varepsilon }{\lambda}\right\}\right|\\
&\lesssim \frac{2l\varepsilon}{\lambda}\cdot d\\
&=\frac{2|G|}{\lambda}.
\end{align}
If $\lambda\le 2\varepsilon$, then we will trivially have
\begin{equation}\label{a74}
|\{x\in\Delta:\, \sup_n|v_n(x)|>c\cdot \lambda\}\le |\Delta|=\frac{|G|}{\varepsilon}\le \frac{2|G|}{\lambda}.
\end{equation}
Combining \e {a36}, \e {a75}, \e {a76} with the last estimates \e {a73} and \e {a74} we deduce \e {a37}.  
\end{proof}
\begin{prop}\label{P11}
	The partial sum operators of trigonometric Fourier series satisfy (D) and ($\textrm{D}^*$) conditions. 
\end{prop}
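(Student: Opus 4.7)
The plan is to derive both (D) and (D*) from \pro{P1}. I take $\zI$ to be the family of rational intervals $[p/n, q/n)$ with integers $0 \le p < q \le n$; this is a basis, since any such $\Delta$ subdivides into $m$ equal pieces whose endpoints are rationals with denominator $mn$ and hence remain in $\zI$, for every $m$.

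For (D), set $G_l = G_l([0,1), \varepsilon)$ from \eqref{a65}, so that $|G_l| = \varepsilon$ and \eqref{a58} holds with $\alpha = 1$. The weak convergence \eqref{a27} is a Riemann-sum argument against continuous test functions, extended to $L^\infty$ by the uniform bound $\||G_l|^{-1}\ZI_{G_l}\|_1 = 1$ and the density of $C([0,1])$ in $L^1$. For \eqref{a23}, fix $\Delta \in \zI$ and let $G_l^{\mathrm{int}}$ be the union of those $\delta_k$ entirely contained in $\Delta$. A direct endpoint computation shows $G_l^{\mathrm{int}} = G_{k_1-k_0}([k_0/l, k_1/l), \varepsilon)$ for suitable integers $k_0 \le k_1$, of the form \eqref{a65} relative to the rational interval $[k_0/l, k_1/l) \in \zI$, so \pro{P1} yields \eqref{a23} for $\ZI_{G_l^{\mathrm{int}}}$. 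The leftover $G_l \cap \Delta \setminus G_l^{\mathrm{int}}$ is a union of at most two partial boundary intervals, each a single interval bounded by the single-interval case at the start of the proof of \pro{P1}. Combining by triangle inequality gives \eqref{a23}.

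For (D*), I exploit the translation invariance of the modified partial sums $\tilde S_n$ from \eqref{a70}: the kernel $\sin 2\pi n(x-t)/(x-t)$ depends only on $x - t$, so $\tilde S^*(\ZI_{\sigma+H})(x) = \tilde S^*(\ZI_H)(x - \sigma)$ on the circle and the weak-$L^1$ level sets are translation invariant. It therefore suffices to find, for each $l$, a shift $\sigma_l$ such that $G_l := \sigma_l + G_l([0,1), \varepsilon) \subset U$; then $|G_l| = \varepsilon$, \eqref{a27} is preserved, and \eqref{a23} follows from the (D) analysis applied to the interval $\Delta - \sigma_l$. Existence of $\sigma_l$ is elementary: each $U - t_k$ is open and dense in the circle, so the finite intersection $\bigcap_{k=1}^l(U - t_k)$ is open and non-empty. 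Any $\sigma_l$ in this intersection places every center $t_k + \sigma_l$ in $U$, after which the common half-length of the $\delta_k$'s (originally $\varepsilon/(2l)$) can be shrunk to some smaller positive $\rho/2$ so that each $[t_k + \sigma_l - \rho/2, t_k + \sigma_l + \rho/2) \subset U$, while $|G_l| \le \varepsilon$ is retained.

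The main obstacle I foresee is verifying that the (D) argument really extends to general, not necessarily rational, intervals $\Delta' = \Delta - \sigma_l$; this is needed to conclude \eqref{a23} from the translated construction. The extension goes through because \pro{P1} places no rationality condition on its ambient interval, but some care is required to see that the boundary-interval decomposition still involves only finitely many pieces and that the single-interval estimate absorbs them. A secondary technicality is wrap-around at the boundary of $[0,1)$: the translate $\sigma_l + G_l([0,1), \varepsilon)$ may cross $1$, but the wrapped piece contributes a single extra boundary interval, again controlled by the single-interval estimate from the start of the proof of \pro{P1}.
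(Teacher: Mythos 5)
Your proposal follows essentially the same route as the paper: condition (D) is obtained from Proposition \ref{P1} applied to the grid sets $G_l([0,1),\varepsilon)$ of \eqref{a65}, and (D*) by choosing a shift placing all the centers $t_k$ inside the dense open set $U$ (exactly the paper's choice of $\alpha$) and shrinking the component intervals, so the two arguments coincide in substance. The only differences are cosmetic: you take the basis of rational intervals instead of all intervals, and you spell out the interior-plus-boundary decomposition of $G_l\cap\Delta$ and the translation-invariance reduction, which the paper compresses into ``one can easily check.''
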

\begin{proof}
To show (D)-condition we chose $\zI$ to be the family of all intervals from $[0,1)$. From \pro {P1} it follows that the set sequence $G_l=G_l([0,1),\varepsilon)$ (see \e{a65}) satisfies condition (D). Now let $U$ be an everywhere dence open set. Observe that for any $l\in\ZN$ there exists a number $\alpha$ such that $t_k=\alpha+k/l\in U$ for each $k=1,2,\ldots, l$. Since $U$ is open, for small enough $0<\delta<\varepsilon$ we will have $G_l=\cup_{k=1}^l[t_k-\delta,t_k+\delta)\subset U$. One can easily check that now the sequence $G_l$ satisfies ($\textrm{D}^*$) condition.
\end{proof}
\subsection{Walsh and Vilenkin systems}
Now consider the Walsh system. We shall use the integral formulas of the partial sums
\begin{equation*}
S_m(x,f)=\sum_{n=0}^{m-1}a_n(f)w_n(x)=\int_0^1D_m(x\oplus t)f(t)dt,
\end{equation*}
where 
\begin{equation*}
D_m(x)=\sum_{k=1}^{m-1}w_k(x)
\end{equation*}
is the Dirichlet kernel and $\oplus$ denotes the dyadic summation. 
We shall suppose that every function and set on $[0,1)$ is $1$-periodically continued. For a set $E\subset [0,1)$ and an integer $n>0$ we shall denote
\begin{equation*}
E(n)=\{x\in [0,1):\, nx\in E\}.
\end{equation*}
The following properties of Dirichlet kernel are well-known (see \cite{GES}):
\begin{align}
&D_{2^n}(x)=\left\{\begin{array}{lcl}
&2^n \hbox{ if }& x\in \Delta_1^{(n)}=[0,2^{-n}),\\
&0\hbox{ if }&x\in[0,1)\setminus \Delta_1^{(n)},
\end{array}
\right.\label{a62}\\
&|D_m(x\oplus y)|\le \frac{1}{|x-y|},\quad x,y\in[0,1).\label{a63}
\end{align}
Let $\phi(x)$ be the function $1/x$ on $[0,1)$ periodically continued. Then, from \e {a63} (with $y=0$) it immediately follows that
\begin{align}\label{a64}
|D_m(2^nx)|\le \phi(2^nx),\quad x\in [0,1),\quad m=1,2,\ldots.
\end{align}
We shall consider the dyadic intervals
\begin{equation*}
\Delta_k^{(n)}=\left[\frac{k-1}{2^n},\frac{k}{2^n}\right), \quad 1\le k\le 2^n,\quad n=0,1,2,\ldots.
\end{equation*}
Given $\Delta=\Delta_k^{(n)}$ and integers $r>0$ and $1\le t\le 2^r$ define the sequense
\begin{equation}\label{a60}
G_l(\Delta,r,t)=\Delta\cap( \Delta_t^{(r)}(2^{l})),\quad l=1,2,\ldots.
\end{equation}
The following statement is the analogous of \pro {P1} for Walsh system.
\begin{prop}\label{P2}
	There is an absolute constant $c>0$ such that for any set $G=G_l(\Delta,r,t)$ of the form \e {a60} it holds the inequality
	\begin{equation}\label{a61}
	|\{S^*(x,\ZI_{G})>\lambda\}|\le c\cdot \frac{|G|}{\lambda},\quad \lambda>0.
	\end{equation}
\end{prop}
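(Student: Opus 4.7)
The plan is to split the bound \e{a61} according to whether $x \in \Delta := \Delta_k^{(n)}$ or $x \in [0,1) \setminus \bar\Delta$. For $x$ outside $\bar\Delta$, \e{a63} immediately gives $|S_m(x, \ZI_G)| \le \int_G \frac{dt}{|x-t|} \le |G|/\dist(x,\Delta)$, from which $|\{x \notin \bar\Delta : S^*(x, \ZI_G) > \lambda\}| \lesssim |G|/\lambda$. When $l < n$ the intersection $G = \Delta \cap \Delta_t^{(r)}(2^l)$ consists of at most two subintervals of $\Delta$, and the estimate inside $\Delta$ reduces to a single-interval version of the same argument. I therefore focus on $l \ge n$ and $x \in \Delta$.

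For this range I would exploit Walsh self-similarity under dyadic dilation. A direct bit-wise calculation yields the identity
\begin{equation*}
D_m(z/2^n) = 2^n D_{m_1}(z) + m_0\, w_{m_1}(z), \qquad m = 2^n m_1 + m_0,\ 0 \le m_0 < 2^n.
\end{equation*}
Writing $x = (k-1)/2^n + x'/2^n$ and $t = (k-1)/2^n + t'/2^n$ with $x', t' \in [0,1)$, the first $n$ binary digits cancel, so $x \oplus t = (x' \oplus t')/2^n$. Substituting and changing variables to $t'$, one checks that $t \in G$ iff $t' \in G' := \Delta_t^{(r)}(2^{l-n})$, and
\begin{equation*}
S_m(x, \ZI_G) = S_{m_1}(x', \ZI_{G'}) + \frac{m_0}{2^n}\, w_{m_1}(x')\, a_{m_1}(\ZI_{G'}),
\end{equation*}
where $a_j(\cdot)$ is the $j$-th Walsh--Fourier coefficient. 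Since $|a_{m_1}(\ZI_{G'})| \le |G'| = 2^{-r}$, the last term is uniformly bounded by $2^{-r}$ in $m$, so the problem reduces to the analogous weak-$L^1$ estimate for $S^*(x', \ZI_{G'})$ on $[0,1)$ with $|G'| = 2^{-r}$.

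For $\ZI_{G'}$ I would use the finite expansion $\ZI_{\Delta_t^{(r)}} = 2^{-r}\sum_{q<2^r} w_q(y_0)\, w_q$ (with $y_0 = (t-1)/2^r$; valid because Walsh functions of index below $2^r$ are constant on $\Delta_t^{(r)}$) combined with the dilation rule $w_q(2^L x') = w_{q\cdot 2^L}(x')$, where $L := l-n$. This shows the Walsh--Fourier coefficients of $\ZI_{G'}$ are supported on $\{q\cdot 2^L : 0 \le q < 2^r\}$, so the partial sums are piecewise constant in $m$ and satisfy
\begin{equation*}
S_m(x', \ZI_{G'}) = S_J(y, \ZI_{\Delta_t^{(r)}}), \quad y := 2^L x' \bmod 1, \quad J := \min(\lceil m/2^L \rceil, 2^r).
\end{equation*}
Since $x' \mapsto y$ is measure-preserving, it suffices to verify the single-dyadic-interval bound $|\{y : S^*(y, \ZI_{\Delta_t^{(r)}}) > \lambda\}| \lesssim 2^{-r}/\lambda$. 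The same expansion gives $S_m(y, \ZI_{\Delta_t^{(r)}}) = 2^{-r} D_m(y \oplus y_0)$ for $m \le 2^r$ (and $= \ZI_{\Delta_t^{(r)}}(y)$ for $m \ge 2^r$); combining the two elementary bounds $|D_m| \le m$ and $|D_m(y \oplus y_0)| \le 1/|y-y_0|$ (from \e{a63}) yields $S^* \le 1$ on $\Delta_t^{(r)}$ and $S^* \le 2^{-r}/\dist(y, \Delta_t^{(r)})$ off it, from which the weak bound follows.

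The main obstacle I expect is setting up the self-similarity reduction cleanly: establishing the identity for $D_m(z/2^n)$ and verifying that the resulting error term is uniformly of order $|G|/|\Delta|$. Once this step is in place the remaining analysis is entirely explicit and markedly simpler than the trigonometric counterpart, because the Walsh partial sums of $\ZI_{\Delta_t^{(r)}}$ at resolutions below $2^r$ collapse to a single dilated Dirichlet kernel, bypassing the delicate oscillation-sum argument needed for the term $v_n$ in the proof of \pro{P1}.
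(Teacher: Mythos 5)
Your proof is correct, and it rests on the same structural fact as the paper's argument: for $l\ge n$ the Walsh spectrum of $\ZI_G$ consists of $2^r$ blocks of length $2^n$ located at the multiples $j\cdot 2^{l}$, so every partial sum is a union of complete blocks plus one uniformly small partial block. The paper extracts this from the product formula $\ZI_{G_l}(x)=2^{-n-r}D_{2^r}(2^{l}x)\,D_{2^n}\bigl(x\oplus\frac{k-1}{2^n}\bigr)$, splits $S_m$ accordingly, and majorizes by the two $m$-independent functions $A$ and $B$ via \e{a64} and \e{a62}; you reach the same splitting by conjugation, through the dilation identity $D_m(z/2^n)=2^nD_{m_1}(z)+m_0w_{m_1}(z)$ (which is correct) together with the measure-preserving folding $x'\mapsto 2^{l-n}x'\bmod 1$: your main term $S_{m_1}(x',\ZI_{G'})=S_J\bigl(y,\ZI_{\Delta_t^{(r)}}\bigr)$ is exactly the paper's product term $2^{-n-r}D_p(2^{l}x)D_{2^n}(x\oplus\cdot)$, and your coefficient remainder, bounded by $2^{-r}$, is the paper's partial block. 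Your packaging buys a cleaner interior estimate — it collapses literally to the single-dyadic-interval weak bound, with no separate majorant $\phi(2^{l}x)$ needed — at the price of bookkeeping you leave implicit and should state: the change of variables $x\in\Delta\mapsto x'\in[0,1)$ multiplies measures by $|\Delta|^{-1}$ while $|\Delta|\cdot|G'|=|G|$, so the weak bound for $S^*(\cdot,\ZI_{G'})$ with constant $|G'|$ transfers to the constant $|G|$ on $\Delta$, and the uniform remainder $2^{-r}=|G|/|\Delta|$ must itself be absorbed (trivially, since for $\lambda\le 2^{1-r}$ one has $|\Delta|\le 2|G|/\lambda$). One small inaccuracy, not fatal: for $l<n$ the set $G$ is an intersection of two dyadic intervals, hence empty or a single dyadic interval rather than ``at most two subintervals'' of $\Delta$ — which is precisely why your single-dyadic-interval computation (the same one the paper performs in that case) applies verbatim.
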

\begin{proof}
First observe that $\Delta_t^{(r)}(2^{l})$  consists of $2^l$ component dyadic intervals. Moreover, in each $\Delta_j^{(l)}$, $j=1,2,\ldots,2^l$ there is only one such interval. So in the case $n>l$ the sets $G_l(\Delta,r,t)$ may either be empty or consist of a single dyadic interval. In the first case the inequality \e {a61} is trivial. If $G$ is a dyadic interval say $\Delta_j^{(m)}$, then
\begin{equation*}
S_k(x,\ZI_G)=\left\{\begin{array}{lcl}
&\ZI_G(x)\hbox{ if }& k\ge 2^m,\\
&2^{-m}D_k\left(x\oplus \frac{j-1}{2^m}\right)\hbox{ if }& k<2^m.
\end{array}
\right.
\end{equation*}
So, according to \e {a63} we will have $|S^*(x,\ZI_G)|\le 2^{-m}\cdot |x-(j-1)/2^m|^{-1}$, then
\begin{align*}
	|\{S^*(x,\ZI_{G})>\lambda\}|\le \ll|\ll\{|x-(j-1)/2^m|<\frac{1}{2^{m}\lambda}\rr\}\rr|\le \frac{2}{2^{m}\lambda}=\frac{2|G|}{\lambda},
\end{align*}
 that immediately implies \e {a61}.

Hence, we can consider the case $l\ge n$. Without loss of generality we can suppose that $t=1$ in the definition of the set $G$. Using \e {a60} and \e {a62}, one can easily check that
\begin{equation*}
2^{-n}D_{2^n}\ll(x\oplus\frac{k-1}{2^n}\rr)=\ZI_\Delta(x)
\end{equation*}
and then
\begin{equation*}
\ZI_{G_l}(x)=2^{-n-r}D_{2^r}\ll(2^{l}\cdot x\rr)\cdot D_{2^n}\ll(x\oplus\frac{k-1}{2^n}\rr).
\end{equation*}
Using multiplicative properties of Walsh functions, we get
\begin{align*}
&D_{2^r}\ll(2^{l}\cdot x\rr)=\sum_{j=0}^{2^r-1}w_j\ll( 2^{l}\cdot x\rr)=\sum_{j=0}^{2^r-1}w_{j\cdot 2^{l}}\ll( x\rr),\\
&D_{2^n}\ll(x\oplus\frac{k-1}{2^n}\rr)=\sum_{i=0}^{2^n-1}w_i\ll(x\oplus\frac{k-1}{2^n}\rr)=\sum_{i=0}^{2^n-1}w_i\ll(\frac{k-1}{2^n}\rr)\cdot w_i\ll(x\rr).
\end{align*}
Thus we get
\begin{equation}\label{pol}
\ZI_{G_l}(x)=2^{-n-r}\sum_{j=0}^{2^r-1}\sum_{i=0}^{2^n-1}w_i\ll(\frac{k-1}{2^n}\rr)w_{j\cdot 2^{l}+i}\ll( x\rr).
\end{equation}
For the spectrums of Walsh polynomials from \e{pol} we have
\begin{align*}
\spec\left(\sum_{i=0}^{2^n-1}w_i\ll(\frac{k-1}{2^n}\rr)w_{j\cdot 2^{l}+i}\ll( x\rr)\right)\\
=\{j\cdot 2^{l}, j\cdot 2^{l}+1,\ldots,  j\cdot 2^{l}+2^n-1\}
\end{align*}
and so they are increasing with respect to $j$, since we have $l\ge n$. So for a given integer $m$ each of those spectrums is either wholly or partially included in $[0,m]$. Moreover, at most one of them can be partially included. Using this observation, $m$'th partial sum of $\ZI_{G_l}$ can be split into two sums, collecting the indexes of wholly included spectrums in the first sum and the rest in the second sum. Namely, for some integers $0\le  p<2^r$, $0\le q\le 2^n$ depended on $m$ we will have
\begin{align*}
S_m(x,\ZI_{G_l})&=2^{-n-r}\sum_{j=0}^{p-1}\sum_{i=0}^{2^n-1}w_i\ll(\frac{k-1}{2^n}\rr)w_{j\cdot 2^{l}+i}\ll( x\rr)\\
&\qquad +2^{-n-r}\sum_{i=0}^{q-1}w_i\ll(\frac{k-1}{2^n}\rr)w_{p\cdot 2^{l}+i}\ll( x\rr)\\
&=2^{-n-r}D_{p}\ll(2^{l}\cdot x\rr)\cdot D_{2^n}\ll(x\oplus\frac{k-1}{2^n}\rr)\\
&\qquad +2^{-n-r}\cdot w_{p\cdot 2^{l}}(x)\sum_{i=0}^{q-1}w_i\ll(x\oplus\frac{k-1}{2^n}\rr),\\
&=2^{-n-r}D_{p}\ll(2^{l}\cdot x\rr)\cdot D_{2^n}\ll(x\oplus\frac{k-1}{2^n}\rr)\\
&\qquad +2^{-n-r}\cdot w_{p\cdot 2^{l}}(x)D_q\ll(x\oplus\frac{k-1}{2^n}\rr),
\end{align*}
where we assume $\sum_a^b=0$ wherever $a>b$. So, applying \e {a64} and \e {a62}, we can say
\begin{align}\label{a66}\\
|S_m(x,\ZI_{G_l})|&\le 2^{-r}\cdot \ZI_{\Delta}(x)\cdot \ll|D_{p}\ll(2^{l}\cdot x\rr)\rr|\\
&\qquad +2^{-n-r}\cdot \left|D_q\ll(x\oplus\frac{k-1}{2^n}\rr)\right|\\
&\lesssim 2^{-r}\ZI_{\Delta}(x)\phi(2^{l}\cdot x)+\frac{1}{2^{n+r}|x-(k-1)/2^n|}\\
&=A(x)+B(x),
\end{align}
where $A(x)$ and $B(x)$ are independent of $m$.  According to the definition of function $\phi$ and condition $l\ge n$,  we have
\begin{align}\label{a67}
&|\{x\in [0,1):\,A(x)>\lambda\}|\\
&\qquad =|\{x\in\Delta :\,\phi(2^{l}x)>2^r\lambda\}|= \frac{|\Delta|}{2^r\lambda}=\frac{|G_l|}{\lambda}.
\end{align}
To estimate $B(x)$ we write
\begin{align}\label{a68}
|\{x\in [0,1):\,B(x)>\lambda\}|&=\ll|\ll\{|x-(k-1)/2^n|<\frac{1}{2^{n+r}\lambda}\rr\}\rr|\\
&\le \frac{2}{2^{n+r}\lambda}=\frac{2|G_l|}{\lambda}.
\end{align}
Combining \e {a66}, \e {a67} and \e {a68} we get \e {a61}.
\end{proof}
As in the trigonometric case, \pro{P2} implies 
\begin{prop}\label{P22}
	The partial sum operators of Walsh series satisfy (D) and ($\textrm{D}^*$) conditions. 
\end{prop}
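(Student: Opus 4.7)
The plan is to follow the pattern of \pro{P11}, substituting \pro{P2} for \pro{P1} and working with dyadic shifts in place of real translations. For condition (D), take $\zI$ to be the family of all dyadic intervals of $[0,1)$, which forms a basis because the $2^m$-subdivision of any dyadic interval is again dyadic. Given $\varepsilon\in(0,1)$, choose $r\in\ZN$ with $\varepsilon/2<2^{-r}\le\varepsilon$ and set $G_l=G_l([0,1),r,1)=\Delta_1^{(r)}(2^l)$ from \e{a60}. Then $|G_l|=2^{-r}\in(\varepsilon/2,\varepsilon]$, so \e{a58} holds with $\alpha=1/2$; the equidistribution \e{a27} is immediate from the equispaced structure of $G_l$; and for any dyadic $\Delta\in\zI$ one has $G_l\cap\Delta=G_l(\Delta,r,1)$, so \e{a23} follows directly from \pro{P2}.

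For condition $(\text{D}^*)$, given a dense open $U\subset[0,1)$, the strategy is to dyadically shift the (D) construction into $U$. Since each dyadic translate $U\oplus k/2^l$ is open and dense, Baire category yields that $V_l=\bigcap_{k=0}^{2^l-1}(U\oplus k/2^l)$ is open and dense, where $A\oplus y=\{x\oplus y:x\in A\}$ denotes the dyadic shift. Choose $r_l\in\ZN$ with $2^{-r_l}\le\varepsilon$ and large enough that $V_l$ contains a dyadic interval of depth $l+r_l$, and let $\alpha_l$ be its left endpoint, so that $\alpha_l$ is a dyadic number of depth $l+r_l$ with $[\alpha_l,\alpha_l+2^{-l-r_l})\subset V_l$. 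Set $G_l=\Delta_1^{(r_l)}(2^l)\oplus\alpha_l$. The inclusion in $V_l$ then forces each of the $2^l$ constituent dyadic intervals of $G_l$ to lie in $U$, so $G_l\subset U$ with $|G_l|=2^{-r_l}\le\varepsilon$, and \e{a27} follows as in (D).

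To verify \e{a23} for the shifted set I will use dyadic shift invariance of Walsh partial sums. The change of variable $t'=t\oplus y$ in the convolution formula gives $S_n(x,\ZI_{E\oplus y})=S_n(x\oplus y,\ZI_E)$, hence $|\{S^*(\ZI_{E\oplus y})>\lambda\}|=|\{S^*(\ZI_E)>\lambda\}|$. Since dyadic XOR sends any $\Delta_k^{(n)}$ to another dyadic interval of depth $n$, writing $\Delta\oplus\alpha_l=\Delta_{k'}^{(n)}$ gives
\[
G_l\cap\Delta=G_l(\Delta\oplus\alpha_l,r_l,1)\oplus\alpha_l,
\]
so \pro{P2} applied to the inner set together with shift invariance yields the required weak-$L^1$ bound. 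The main obstacle is the coupling of the Baire step with the requirement that $\alpha_l$ be a dyadic number of the right depth so that the shifted set retains the form of \e{a60}; unlike the trigonometric case, where the real shift in \pro{P11} is unrestricted and \pro{P1} transfers automatically, here only dyadic shifts preserve the structure used by \pro{P2}. Allowing $r_l$ (and hence $|G_l|$) to depend on $l$ is essential: it lets us locate a dyadic sub-interval of arbitrarily small size inside $V_l$, so the construction works even for pathological dense open $U$ whose complement has large measure.
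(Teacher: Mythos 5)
Your proposal is correct, and its (D) half is essentially the paper's own argument: the paper likewise takes $G_l=G_l([0,1),r,1)$ with $2^{-r}\asymp\varepsilon$, although it declares $\zI$ to be \emph{all} subintervals of $[0,1)$; your restriction to dyadic intervals is the more careful reading, since \pro{P2} only covers dyadic $\Delta$, and the dyadic family is indeed a basis in the paper's sense. For ($\textrm{D}^*$) you take a genuinely different route. The paper never shifts: it picks $x\in[0,2^{-l})$ with $x+j2^{-l}\in U$ for all $j$ (a finite intersection of dense open sets, as in your $V_l$ step), and then, for $r$ large, chooses the index $t$ so that $G_l=\Delta_t^{(r)}(2^{l})\subset U$; thus it exploits the free parameter $t$ already built into \e{a60}, and \pro{P2} applies verbatim to $G_l\cap\Delta=G_l(\Delta,r,t)$. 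You instead shift $\Delta_1^{(r_l)}(2^l)$ by a dyadic rational $\alpha_l$ of depth $l+r_l$ and transfer the weak-$L^1$ bound through the identity $S_n(x,\ZI_{E\oplus y})=S_n(x\oplus y,\ZI_E)$ together with the bookkeeping $G_l\cap\Delta=\bigl(G_l(\Delta\oplus\alpha_l,r_l,1)\bigr)\oplus\alpha_l$; both of these are correct, so your argument goes through. What the comparison buys: your route needs the extra shift-invariance lemma but makes the localization inside $U$ very transparent; the paper's route is shorter because the shift is unnecessary -- indeed, since $\alpha_l$ is dyadic of depth $l+r_l$, your set $\Delta_1^{(r_l)}(2^l)\oplus\alpha_l$ is literally $\Delta_t^{(r_l)}(2^l)$ for the $t$ encoded by the last $r_l$ binary digits of $\alpha_l$, so you could have quoted \pro{P2} directly. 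One shared soft spot, which I flag but do not count against you since the paper is equally terse: in the ($\textrm{D}^*$) construction $r_l$ may be forced to grow with $l$ (small dense open $U$), so $|G_l|\to 0$, and then the weak convergence \e{a27} against arbitrary $L^\infty$ test functions cannot hold literally (test against $\ZI_{[0,1)\setminus U}$); what actually holds, and what the proofs of the lemmas use, is convergence of the normalized averages against step and continuous functions, so "follows as in (D)" glosses over the fact that $r$ is no longer fixed.
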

\begin{proof}
	We chose $\zI$ to be the family of all intervals from $[0,1)$. From \pro {P2} it follows that the sequence of sets $G_l=G_l([0,1),r,1)$ defined in \e{a60} and with $r=[\log_2(1/\varepsilon)]+1$ satisfies condition (D). 
	
If $U$ is an everywhere dense open set, then for any integer $l>0$ we find a point $x\in [0,2^{-l})$ such that $V=\{x+j2^{-l}:\,j=0,1,\ldots,2^l-1\}\subset  U$. Obviously, for any $r$ one can find $1\le t\le 2^r$ such that $V\subset \Delta_t^{(r)}(2^{l})$.  Thus, for a bigger enough integers $r$, satisfying also $r<\log_2(1/\varepsilon)$, we can have $G_l=\Delta_t^{(r)}(2^{l})\subset U$.
On the other hand for any dyadic interval $\Delta=\Delta_k^{(n)}$ we have (see \e {a60})
\begin{equation*}
G_l\cap \Delta=G_l(\Delta,r,t)=\Delta\cap \Delta_t^{(r)}(2^{l}).
\end{equation*}
So, according to \pro {P2} these sets satisfy the weak inequality \e {a23} and so we will have condition ($\textrm{D}^*$).
\end{proof}
\begin{remark}
	Analogously, weak type estimate \e{a61}  and so properties (D) and ($\textrm{D}^*$) can be proved also for general Vilenkin systems of bounded type. For the sake of simplicity we restricted us only the consideration of the Walsh system case.
\end{remark}
\bibliographystyle{plain}
\begin{bibdiv}
	\begin{biblist}
		\bib{Bari}{book}{
			author={Bary, N. K.},
			title={A treatise on trigonometric series. Vols. I, II},
			series={Authorized translation by Margaret F. Mullins. A Pergamon Press
				Book},
			publisher={The Macmillan Co., New York},
			date={1964},
			pages={Vol. I: xxiii+553 pp. Vol. II: xix+508},
			review={\MR{0171116}},
		}
		
		\bib{GoWa}{article}{
			author={Goffman, Casper},
			author={Waterman, Daniel},
			title={Some aspects of Fourier series},
			journal={Amer. Math. Monthly},
			volume={77},
			date={1970},
			pages={119--133},
			issn={0002-9890},
			review={\MR{0252940}},
		}
		\bib{GES}{book}{
		author={Golubov, B. },
		author={Efimov, A. },
		author={Skvortsov, V. },
		title={Walsh Series and Transforms: Theory and Applications (Mathematics and its Applications)},
		publisher={Springer-Science+Buisness Media, B.V.},
		date={1991},
	
	}
	
	\bib{Grig1}{article}{
		author={Grigoryan, M. G.},
		title={Convergence almost everywhere of Fourier-Walsh series of
			integrable functions},
		language={Russian, with English and Armenian summaries},
		journal={Izv. Akad. Nauk Armyan. SSR Ser. Mat.},
		volume={18},
		date={1983},
		number={4},
		pages={291--304},
		issn={0002-3043},
		review={\MR{723562}},
	}
\bib{Grig2}{article}{
	author={Grigoryan, M. G.},
	title={On the convergence of Fourier series in the metric of $L^1$},
	language={English, with Russian summary},
	journal={Anal. Math.},
	volume={17},
	date={1991},
	number={3},
	pages={211--237},
	issn={0133-3852},
	review={\MR{1198962}},
}

\bib{Grig3}{article}{
	author={Grigoryan, M. G.},
	title={Convergence in the $L^1$-metric and almost everywhere convergence
		of Fourier series in complete orthonormal systems},
	language={Russian},
	journal={Mat. Sb.},
	volume={181},
	date={1990},
	number={8},
	pages={1011--1030},
	issn={0368-8666},
	translation={
		journal={Math. USSR-Sb.},
		volume={70},
		date={1991},
		number={2},
		pages={445--466},
		issn={0025-5734},
	},
	review={\MR{1076140}},
}
\bib{Grig4}{article}{
	author={Grigoryan, M. G.},
	title={Modifications of functions, Fourier coefficients, and nonlinear
		approximation},
	language={Russian, with Russian summary},
	journal={Mat. Sb.},
	volume={203},
	date={2012},
	number={3},
	pages={49--78},
	issn={0368-8666},
	translation={
		journal={Sb. Math.},
		volume={203},
		date={2012},
		number={3-4},
		pages={351--379},
		issn={1064-5616},
	},
	review={\MR{2961732}},
}
\bib{GrSa}{article}{
	author={Grigoryan, M. G.},
	author={Sargsyan, S. A.},
	title={Almost everywhere convergence of greedy algorithm in Vilenkin system},
	journal={Izv. Nats. Akad. Nauk Armenii Mat.},
	
	review={ accepted},
}

\bib{GrNa}{article}{
	author={Grigoryan, M. G.},
	author={Navasardyan, K. A.},
	title={On the behavior of Fourier coefficients in the Walsh system},
	language={Russian, with English and Russian summaries},
	journal={Izv. Nats. Akad. Nauk Armenii Mat.},
	volume={51},
	date={2016},
	number={1},
	pages={3--20},
	issn={0002-3043},
	translation={
		journal={J. Contemp. Math. Anal.},
		volume={51},
		date={2016},
		number={1},
		pages={21--33},
		issn={1068-3623},
	},
	review={\MR{3497247}},
}
	\bib{BaRu}{article}{
		author={Bala\v sov, L. A.},
		author={Rubin\v ste\u\i n, A. I.},
		title={Series in the Walsh system, and their generalizations},
		language={Russian},
		conference={
			title={Mathematical analysis, 1970 (Russian)},
		},
		book={
			publisher={Akad. Nauk SSSR Vsesojuz. Inst. Nau\v cn. i Tehn. Informacii,
				Moscow},
		},
		date={1971},
		pages={147--202. (errata insert)},
		review={\MR{0370029}},
	}
\bib{Kash}{article}{
	author={Kashin, B. S.},
	title={An analogue of Men\cprime shov's ``correction'' theorem for discrete
		orthonormal systems},
	language={Russian},
	journal={Mat. Zametki},
	volume={46},
	date={1989},
	number={6},
	pages={67--74, 127},
	issn={0025-567X},
	translation={
		journal={Math. Notes},
		volume={46},
		date={1989},
		number={5-6},
		pages={934--939 (1990)},
		issn={0001-4346},
	},
	review={\MR{1051053}},
}
\bib{KaKo}{article}{
	author={Kashin, B. S.},
	author={Kosheleva, G. G.},
	title={An approach to ``correction'' theorems},
	language={Russian},
	journal={Vestnik Moskov. Univ. Ser. I Mat. Mekh.},
	date={1988},
	number={4},
	pages={6--9, 105},
	issn={0579-9368},
	translation={
		journal={Moscow Univ. Math. Bull.},
		volume={43},
		date={1988},
		number={4},
		pages={1--5},
		issn={0027-1322},
	},
	review={\MR{972709}},
}
\bib{Khr}{article}{
	author={Khrushch\"ev, S. V.},
	title={Menshov's correction theorem and Gaussian processes},
	language={Russian},
	note={Spectral theory of functions and operators, II},
	journal={Trudy Mat. Inst. Steklov.},
	volume={155},
	date={1981},
	pages={151--181, 185--187},
	issn={0371-9685},
	review={\MR{615569}},
}
\bib{Men1}{article}{
	author={Menshov, D. E.},
	title={Sur la repr\'esentation des fonctions mesurables par des s\'eries
		trigonom\'etriques},
	language={French., with Russian summary},
	journal={Rec. Math. [Mat. Sbornik] N. S.},
	volume={9 (51)},
	date={1941},
	pages={667--692},
	review={\MR{0005133}},
}
\bib{Men2}{article}{
	author={Menshov, D. E.},
	title={On Fourier series of summable functions},
	language={Russian},
	journal={Trudy Moskov. Mat. Ob\v s\v c.},
	volume={1},
	date={1952},
	pages={5--38},
	issn={0134-8663},
	review={\MR{0049368}},
}
\bib{Ole}{article}{
	author={Olevskii , A. M.},
	title={Modifications of functions and Fourier series},
	language={Russian},
	journal={Uspekhi Mat. Nauk},
	volume={40},
	date={1985},
	number={3(243)},
	pages={157--193, 240},
	issn={0042-1316},
	review={\MR{795189}},
}

	\end{biblist}
\end{bibdiv}

\end{document}